\newcommand{\vfi}{\varphi}
\newtheorem{theorem}{\sc Theorem}[section]
\newtheorem{thm}[theorem]{\sc Theorem}
\newtheorem{lem}[theorem]{\sc Lemma}
\newtheorem{prop}[theorem]{\sc Proposition}
\newtheorem{rem}[theorem]{\sc Remark}
\newtheorem*{thmA}{Theorem A}
\newtheorem*{thmB}{Theorem B}
\title[Non-abelian tensor square]{The non-abelian 
tensor square of residually finite groups}
\author[Bastos]{R. Bastos}
\address{ Departamento de Matemat\'atica, Universidade de Bras\'ilia,
Brasilia-DF, 70910-900 Brazil }
\email{bastos@mat.unb.br}
\author[Rocco]{N.\,R. Rocco }
\address{ Departamento de Matemat\'atica, Universidade de Bras\'ilia,
Brasilia-DF, 70910-900 Brazil }
\email{norai@unb.br}
\subjclass[2010]{20F45, 20E26, 20F40}
\keywords{Residually finite groups; Engel elements}
\begin{document}

\maketitle

\begin{abstract}
Let $m,n$ be positive integers and $p$ a prime. We denote by $\nu(G)$ an extension of the non-abelian tensor square $G \otimes G$ by $G \times G$. We prove that if $G$ is a residually finite group satisfying some non-trivial identity $f \equiv~1$ and for every $x,y \in G$ there exists a $p$-power $q=q(x,y)$ such that $[x,y^{\vfi}]^q = 1$, then the derived subgroup $\nu(G)'$ is locally finite (Theorem A). Moreover, we show that if $G$ is a residually finite group in which for every $x,y \in G$ there exists a $p$-power $q=q(x,y)$ dividing $p^m$ such that $[x,y^{\vfi}]^q$ is left $n$-Engel, then the non-abelian tensor square $G \otimes G$ is locally virtually nilpotent (Theorem B).        
\end{abstract}

\maketitle

\section{Introduction}The non-abelian tensor square $G \otimes G$ of a group $G$ as introduced by Brown and Loday \cite{BL84, BL} is defined to be the group generated by all symbols $\; \, g\otimes h, \; g,h\in G$, subject to the relations
\[
gg_1 \otimes h = ( g^{g_1}\otimes h^{g_1}) (g_1\otimes h) \quad
\mbox{and} \quad g\otimes hh_1 = (g\otimes h_1)( g^{h_1} \otimes
h^{h_1})
\]
for all $g,g_1, h,h_1 \in G$.

This is a particular case of a non-abelian tensor product $G \otimes H$, of groups $G$ and $H$, under the assumption that $G$ and $H$ acts one on each other and on itself by conjugation. 

We observe that the defining relations of the tensor square can be viewed as abstractions of commutator relations; thus in \cite{NR1} the following related construction is considered: Let $G$ and $H$ be 
groups and $\varphi : H \rightarrow H^{\varphi}$ an isomorphism ($H^{\varphi}$ is an isomorphic copy of $H$, where $h \mapsto h^{\varphi}$, for all $h \in H$).
 Define the group $\nu(G)$ to be \[ \nu (G):= \langle 
G,G^{\varphi} \ \vert \ [g_1,{g_2}^{\varphi}]^{g_3}=[{g_1}^{g_3},({g_2}^{g_3})^{\varphi}]=[g_1,{g_2}^{\varphi}]^{{g_3}^{\varphi}},
\; \ g_i \in G \rangle .\]

The motivation for studying $\nu(G)$ is the commutator connection:
\begin{prop} \label{prop.g} $($\cite[Proposition 2.6]{NR1}$)$ The map  $\Phi: G \otimes G \rightarrow [G, G^{\varphi}]$,
defined by $g \otimes h \mapsto [g , h^{\varphi}], \; \forall g, h \in G,$ is an isomorphism.
\end{prop}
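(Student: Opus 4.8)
The plan is to produce $\Phi$ as a surjective homomorphism via the universal property of the tensor square and then to invert it by realizing $[G,G^{\varphi}]$ as the kernel of the natural map $\nu(G)\to G\times G$. First I would check that the commutator map $\kappa\colon G\times G\to\nu(G)$, $\kappa(g,h)=[g,h^{\varphi}]$, is a crossed pairing. Using the standard identities $[ab,c]=[a,c]^{b}[b,c]$ and $[a,bc]=[a,c]\,[a,b]^{c}$ together with the defining relations of $\nu(G)$, namely $[g_1,g_2^{\varphi}]^{g_3}=[g_1^{g_3},(g_2^{g_3})^{\varphi}]=[g_1,g_2^{\varphi}]^{g_3^{\varphi}}$, one computes
\[
[gg_1,h^{\varphi}]=[g^{g_1},(h^{g_1})^{\varphi}]\,[g_1,h^{\varphi}],\qquad [g,(hh_1)^{\varphi}]=[g,h_1^{\varphi}]\,[g^{h_1},(h^{h_1})^{\varphi}],
\]
which are exactly the images under $\kappa$ of the two defining relations of $G\otimes G$. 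Hence $\kappa$ is a crossed pairing, so it induces a homomorphism $\Phi\colon G\otimes G\to\nu(G)$ with $g\otimes h\mapsto[g,h^{\varphi}]$; since $[G,G^{\varphi}]$ is by definition generated by these commutators, $\Phi$ maps onto $[G,G^{\varphi}]$.

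For injectivity I would construct a left inverse. Observe first that the assignment $(a\otimes b)^{g}=a^{g}\otimes b^{g}$ defines an action of $G$ on $G\otimes G$ (again a crossed-pairing check), and that the two defining relations of $\nu(G)$ say precisely that conjugation by $g$ and by $g^{\varphi}$ act on $[G,G^{\varphi}]$ by this same diagonal action. I would then assemble an auxiliary group $\Gamma$ forming an extension $1\to G\otimes G\to\Gamma\to G\times G\to 1$, with distinguished preimages $\bar g,\overline{h^{\varphi}}$ of the two factors acting on $G\otimes G$ diagonally and satisfying $[\bar g,\overline{h^{\varphi}}]=g\otimes h$. Defining $\theta\colon\nu(G)\to\Gamma$ on the generators by $g\mapsto\bar g$ and $g^{\varphi}\mapsto\overline{g^{\varphi}}$, the identity $\theta([g,h^{\varphi}])=g\otimes h$ yields $\theta\circ\Phi=\mathrm{id}_{G\otimes G}$, whence $\Phi$ is injective and therefore an isomorphism.

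The main obstacle is the construction of $\Gamma$ and the verification that $\theta$ is a homomorphism, i.e. that the full set of defining relations of $\nu(G)$ holds in $\Gamma$. Because the images $\bar g,\overline{h^{\varphi}}$ of the generators do not commute (their commutators are the nontrivial tensors $g\otimes h$), $\Gamma$ cannot be taken to be the semidirect product $(G\otimes G)\rtimes(G\times G)$ with $G\times G$ embedded via these generators; its multiplication must instead be defined through a factor set with values in $G\otimes G$ built from the tensor symbols, and the work lies in checking associativity of this multiplication and compatibility of the diagonal actions of the two copies of $G$. Once $\Gamma$ and $\theta$ are shown to be well defined, the equality $\theta\circ\Phi=\mathrm{id}$ is immediate from $[\bar g,\overline{h^{\varphi}}]=g\otimes h$, so the entire difficulty is concentrated in this consistency check.
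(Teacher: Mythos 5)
The paper itself offers no proof of this statement --- it is imported verbatim from \cite[Proposition 2.6]{NR1} --- so the comparison below is with the argument in that source, whose two-step strategy (crossed pairing for surjectivity, an auxiliary extension of $G\times G$ by $G\otimes G$ for injectivity) is exactly the one you outline. The surjectivity half of your proposal is complete and correct: $[gg_1,h^{\varphi}]=[g,h^{\varphi}]^{g_1}[g_1,h^{\varphi}]=[g^{g_1},(h^{g_1})^{\varphi}][g_1,h^{\varphi}]$ and $[g,(hh_1)^{\varphi}]=[g,h_1^{\varphi}][g,h^{\varphi}]^{h_1^{\varphi}}=[g,h_1^{\varphi}][g^{h_1},(h^{h_1})^{\varphi}]$ reproduce the two defining relations of $G\otimes G$ inside $\nu(G)$, so the induced map $\Phi$ exists and is onto $[G,G^{\varphi}]$, which is indeed generated by the commutators $[g,h^{\varphi}]$ since these form a normal subset of $\nu(G)$ (cf.\ Lemma \ref{lem.closed}).

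The injectivity half, however, is a plan rather than a proof, and the step you defer is the entire mathematical content of that half. You rightly note that the naive split product $(G\otimes G)\rtimes(G\times G)$ cannot serve as $\Gamma$, because in it the canonical preimages of $(g,1)$ and $(1,h)$ commute while you need their commutator to be $g\otimes h$; but replacing it by ``a factor set with values in $G\otimes G$ built from the tensor symbols'' is a description of the difficulty, not a resolution of it. The construction that actually works (and is the one in \cite{NR1}) is the iterated semidirect product $((G\otimes G)\rtimes G)\rtimes G$, where the inner copy of $G$ acts diagonally on $G\otimes G$ and the outer copy acts diagonally on $G\otimes G$ and on the inner copy by a twist of the form $h\mapsto(\hbox{tensor correction})\cdot h^{g}$; one must verify that this twist is an action by automorphisms, that $\theta$ annihilates all three families of defining relators of $\nu(G)$, and only then does $\theta\circ\Phi=\mathrm{id}$ on the generators $g\otimes h$ (which, as you say, is immediate at that point) yield injectivity. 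Until that verification is carried out you have neither $\Gamma$ nor $\theta$, hence no left inverse; the approach is the right one, but the second half remains to be written.
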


From now on we identify the tensor square $G \otimes G$ with the subgroup $[G,G^{\vfi}]$ of $\nu(G)$ (see for instance \cite{NR} for more details). 

It is a natural task to study structural aspects of $\nu(G)$ and $G \otimes G$ for different classes of infinite groups $G.$ Moravec \cite{M} proves that if $G$ is locally finite then so is $G \otimes G$ (and, consequently, also $\nu(G)$). In the article \cite{BR} the authors establish some structural results concerning $\nu(G)$ when $G$ is a finite-by-nilpotent group. In the present paper we are concerned with the non-abelian tensor square of residually finite groups. 

According to the solution of the Restricted Burnside Problem (Zelmanov, \cite{ze1,ze2}) every residually finite group of finite exponent is locally finite. Another interesting result in this context, due to Shumyatsky \cite{shu3} states that if $G$ is a residually finite group satisfying a non-trivial identity and generated by a normal commutator-closed set of $p$-elements, then $G$ is locally finite. A subset $X$ of a group $G$ is called {\it commutator-closed} if $[x,y]\in X$ whenever $x,y\in X$. In a certain way, our results can be viewed as generalizations of some of the above results. 

\begin{thmA}
Let $p$ a prime. Let $m$ be a positive integer and $G$ a residually finite group satisfying some non-trivial identity $f \equiv~1$. Suppose that for every $x,y \in G$ there exists a positive integer $q=q(x,y)$ dividing $p^m$ such that the tensor $[x,y^{\vfi}]^q = 1$. Then the derived subgroup $\nu(G)'$ is locally finite. 
\end{thmA}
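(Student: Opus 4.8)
The plan is to split $\nu(G)'$ into two manageable pieces using the description of $\nu(G)$ modulo its tensor square. Since $[G,G^{\vfi}]$ is normal in $\nu(G)$ and the images of $G$ and $G^{\vfi}$ commute with trivial intersection in $\nu(G)/[G,G^{\vfi}]$, one has $\nu(G)/[G,G^{\vfi}]\cong G\times G$; because $[G,G^{\vfi}]\leq \nu(G)'$ this yields a short exact sequence
\[
1 \longrightarrow [G,G^{\vfi}] \longrightarrow \nu(G)' \longrightarrow G'\times (G')^{\vfi} \longrightarrow 1 .
\]
As the class of locally finite groups is closed under extensions, it suffices to prove that $G'$ and the tensor square $[G,G^{\vfi}]$ are both locally finite.

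For $G'$ I would use the retraction $\rho:\nu(G)\to G$ determined by $g\mapsto g$, $g^{\vfi}\mapsto g$. Applying $\rho$ to $[x,y^{\vfi}]^{q}=1$ gives $[x,y]^{q}=1$, so every commutator of $G$ is a $p$-element of order dividing $p^{m}$. The set $X=\{[x,y]:x,y\in G\}$ generates $G'$, is closed under conjugation (since $[x,y]^{g}=[x^{g},y^{g}]$), and is commutator-closed (the commutator of two commutators is again a commutator of two elements of $G$). Since $G'$ is residually finite and inherits the identity $f\equiv 1$, the theorem of Shumyatsky applies to $G'$ and shows that $G'$ is locally finite.

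The main work is the tensor square. Write $T=[G,G^{\vfi}]$ and set $\kappa=\rho|_{T}:T\to G'$, so that $\kappa([c,d^{\vfi}])=[c,d]$ and $\kappa$ is onto $G'$ with kernel $C$. The crucial observation is that the conjugation action of $T$ on itself factors through $\kappa$: by the defining relations, conjugation by $g$ and by $g^{\vfi}$ act identically on tensors (both send $[a,b^{\vfi}]$ to $[a^{g},(b^{g})^{\vfi}]$), hence conjugation by a tensor $[c,d^{\vfi}]$ agrees with conjugation by $[c,d]=\kappa([c,d^{\vfi}])\in G'$, and this extends multiplicatively to all of $T$. Consequently $C\leq Z(T)$. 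Now given finitely many tensors, let $T_{0}$ be the subgroup they generate. Then $T_{0}/(T_{0}\cap C)$ embeds in $G'$, which is locally finite by the previous step, so this quotient is finite; as $T_{0}\cap C$ is central in $T_{0}$, the group $T_{0}$ is centre-by-finite and Schur's theorem gives that $T_{0}'$ is finite. Finally $T_{0}/T_{0}'$ is a finitely generated abelian group of exponent dividing $p^{m}$ (its generators are images of tensors, each of order dividing $p^{m}$), hence finite, so $T_{0}$ itself is finite. Every finitely generated subgroup of $T$ lies in such a $T_{0}$, whence $T$ is locally finite and the proof is complete.

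I expect the delicate step to be the verification that the self-action of $T$ factors through $G'$, that is, that $C=\ker\kappa$ is central. This is precisely what converts the bounded-exponent hypothesis on the tensors into finiteness of finitely generated subgroups via Schur's theorem; the remaining ingredients are the cited theorem of Shumyatsky and the standard closure of locally finite groups under extensions.
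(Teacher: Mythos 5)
Your proof is correct and follows essentially the same route as the paper: local finiteness of $G'$ via Shumyatsky's theorem applied to the normal commutator-closed set of commutators (which are $p$-elements by hypothesis), then the centre-by-finite/Schur argument on finitely generated subgroups of $[G,G^{\vfi}]$ (the paper's Proposition 4.1), combined with the decomposition of $\nu(G)'$ as an extension of $[G,G^{\vfi}]$ by $G'\times (G')^{\vfi}$. Your explicit verification that $\ker\rho'$ is central in $[G,G^{\vfi}]$ is a detail the paper only asserts, but the underlying argument is identical.
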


Let $G$ be a group. For elements $x,y$ of $G$ we define $[x,{}_1y]=[x,y]$ and $[x,{}_{i+1}y]=[[x,{}_{i}y],y]$ for $i\geq1$, where $[x,y] = x^{-1}y^{-1}xy$. An element $y\in G$ is called {\em left $n$-Engel} if for any $x\in G$ we have $[x,{}_{n}y]=1$. The group $G$ is called a {\em left $n$-Engel group} if $[x,{}_{n}y]=1$ for all $x,y \in G$.   

Another result that was deduced following the solution of the Restricted Burnside Problem is that any residually finite $n$-Engel group is locally nilpotent (J.\,S. Wilson, \cite{w}). Later, Shumyatsky proved that if $G$ is a residually finite group in which every commutator is left $n$-Engel in $G$, then the derived subgroup $G'$ is locally nilpotent \cite{shu1,shu2}. In the present paper we establish the following related result.

\begin{thmB} \label{ThmB}
Let $m,n$ be positive integers and $p$ a prime. Suppose that $G$ is a residually finite group in which for every $x,y \in G$ there exists a positive integer $q=q(x,y)$ dividing $p^m$ such that the element $[x,y^{\vfi}]^q$ is left $n$-Engel in $\nu(G)$. Then the non-abelian tensor square $G \otimes G$ is locally virtually nilpotent.   
\end{thmB}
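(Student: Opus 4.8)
The plan is to reduce the statement to a structural dichotomy for the tensor square and then feed the two resulting pieces into the solution of the Restricted Burnside Problem and into the Lie-theoretic method behind Wilson's and Shumyatsky's Engel theorems. First I would reduce to the case that $G$ is finitely generated: any finitely generated subgroup of $G\otimes G=[G,G^{\vfi}]$ is contained in $[H,H^{\vfi}]$ for some finitely generated $H\le G$, and such an $H$ inherits both residual finiteness and the hypothesis on tensors; since local virtual nilpotency is a property of finitely generated subgroups, it suffices to treat $[H,H^{\vfi}]$. I would then record the two structural inputs on $\nu$: that the set of tensors $T=\{[x,y^{\vfi}] : x,y\in G\}$ is a normal, commutator-closed generating set of $[G,G^{\vfi}]$ (a consequence of the defining relations of $\nu(G)$, cf.\ \cite{NR1,NR}), and that $\nu(G)$ is residually finite, so that it embeds in the inverse limit of the finite groups $\nu(G/N)$ as $N$ ranges over the finite-index normal subgroups of $G$ (using Moravec's finiteness of the tensor square of a finite group \cite{M} together with the functoriality $\nu(G)\to\nu(G/N)$).

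Working in a finite quotient $Q=\nu(G/N)$, the image of each $[x,y^{\vfi}]^q$ remains left $n$-Engel, since the left $n$-Engel property passes to homomorphic images. By Baer's theorem every left $n$-Engel element of a finite group lies in the Fitting subgroup $F=\mathrm{Fit}(Q)$; as $q$ divides $p^{m}$ this forces $\bar t^{\,p^{m}}\in F$ for the image $\bar t$ of every tensor. Consequently the images of $T$ in $Q/F$ form a normal, commutator-closed set of $p$-elements of exponent dividing $p^{m}$. Tracking this across all finite quotients, I would let $N_0$ denote the preimage in $[G,G^{\vfi}]$ of the (componentwise) Fitting subgroups, so that $[G,G^{\vfi}]/N_0$ is a residually finite group generated by a normal commutator-closed set of $p$-elements of bounded exponent.

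For the quotient $[G,G^{\vfi}]/N_0$ I would invoke Zelmanov's solution of the Restricted Burnside Problem in the form used by Shumyatsky \cite{shu3}: a residually finite group generated by a normal commutator-closed set of $p$-elements is locally finite; Theorem A supplies the parallel torsion statement and can be cited to control this bounded-exponent factor. For the subgroup $N_0$, generated by the left $n$-Engel tensor-powers and their conjugates (which are again left $n$-Engel), I would run the Lie-ring argument of Shumyatsky's Engel-commutator theorem \cite{shu1,shu2}: associate to the relevant finite quotients the corresponding restricted Lie algebra, translate the $n$-Engel condition on the generating tensor-powers into an Engel condition on a generating set of the Lie algebra, and apply Zelmanov's nilpotency theorem for Engel Lie algebras to obtain nilpotency of bounded class, whence $N_0$ is locally nilpotent.

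Finally I would assemble the two halves with the elementary observation that if $N_0\trianglelefteq K$ with $N_0$ locally nilpotent and $K/N_0$ locally finite, then $K$ is locally virtually nilpotent: a finitely generated $S\le K$ meets $N_0$ in the finite-index subgroup $S\cap N_0$, which is finitely generated and locally nilpotent, hence nilpotent, so $S$ is virtually nilpotent. Applying this to $K=[G,G^{\vfi}]=G\otimes G$ completes the proof. The main obstacle is the step showing that $N_0$ is locally nilpotent: the hypothesis only asserts that a \emph{generating} set of tensor-powers is left $n$-Engel, not that the ambient group obeys an Engel law, so one cannot apply Wilson's theorem \cite{w} directly and must instead transport Shumyatsky's commutator-Engel analysis into the tensor setting, keeping the Engel length $n$ and the bound $p^{m}$ uniform across all finite quotients. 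A secondary difficulty is justifying the residual finiteness of $\nu(G)$ and the compatibility of the Fitting subgroups under the maps $\nu(G)\to\nu(G/N)$, which is precisely what legitimizes the inverse-limit argument.
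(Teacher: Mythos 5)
Your overall strategy diverges from the paper's and contains two genuine gaps. First, your reduction rests on the claim that $\nu(G)$ is residually finite and embeds in the inverse limit of the finite groups $\nu(G/N)$. This does not follow from residual finiteness of $G$: the kernel of $[G,G^{\vfi}]\to G'$ is the central subgroup $\mu(G)$, which sits over the Schur multiplier of $G$ and can fail to be residually finite, and the maps $\nu(G)\to\nu(G/N)$ need not separate its points. You flag this as a ``secondary difficulty,'' but it is in fact fatal to the inverse-limit/Fitting-subgroup architecture of your argument. The paper avoids the issue entirely: it uses the epimorphism $\rho:\nu(G)\to G$ to push the hypothesis down to $G$ (Lemma~\ref{Engel} shows each $[x,y]^q$ is left $n$-Engel in $G$), proves the substantive result inside the residually finite group $G$ itself (Proposition~\ref{prop.dash}: $G'$ is locally virtually nilpotent), and then lifts back through the central extension $1\to\mu(G)\to[G,G^{\vfi}]\to G'\to 1$, using that a finitely generated central-by-(virtually nilpotent) group is virtually nilpotent.

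Second, the step you correctly identify as the main obstacle --- local nilpotency of your subgroup $N_0$ --- is not carried out, and as set up it would not go through: $N_0$, defined as the preimage of the componentwise Fitting subgroups, need not be generated by the Engel tensor-powers and their conjugates, so Shumyatsky's commutator--Engel analysis does not apply to it as stated. Even for the subgroup that \emph{is} generated by those powers, the Lie-theoretic machine needs two inputs you do not supply: (i) a non-trivial group identity, so that the Wilson--Zelmanov theorem makes the associated Lie algebra PI before Zelmanov's nilpotency theorem can be invoked --- the paper manufactures such an identity from the pointwise hypothesis, namely $[z,{}_n[x_1,y_1],{}_n[x_1,y_1]^p,\ldots,{}_n[x_1,y_1]^{p^m}]\equiv 1$, which holds precisely because \emph{some} divisor of $p^m$ works for each commutator; and (ii) the passage from nilpotency of $L_p(H)$ to solubility and then to local nilpotency of normal closures, which runs through Lazard's theorem, the Tits alternative, and Gruenberg's theorem. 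Your Baer/Fitting dichotomy is an interesting alternative skeleton, but without these ingredients neither half of it is established.
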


A natural question arising in the context of Theorem B is whether the theorem remains valid when $q$ is allowed to be an arbitrary positive integer, rather than a $p$-power (see Proposition \ref{prop.dash} in Section 4 for details). 

The paper is organized as follows. Section 2 presents terminology and some preparatory results that are later used in the proofs of our main results. In Section 3 we describe some important ingredients of what is often called “Lie methods in group theory”. These are essential in the proof of Theorem B. Section 4 contains the proofs of the main theorems.

\section{Preliminary results}

The following basic properties are consequences of 
the defining relations of $\nu(G)$ and the commutator rules (see \cite[Section 2]{NR1} for more details). 

\begin{lem} 
\label{basic.nu}
The following relations hold in $\nu(G)$, for all 
$g, h, x, y \in G$.
\begin{itemize}
\item[$(i)$] $[g, h^{\varphi}]^{[x, y^{\varphi}]} = [g, h^{\varphi}]^{[x, 
y]}$; 
\item[$(ii)$] $[g, h^{\varphi}, x^{\varphi}] = [g, h, x^{\varphi}] = [g, 
h^{\varphi}, x] = [g^{\vfi}, h, x^{\vfi}] = [g^{\vfi}, h^{\vfi}, x] = 
[g^{\vfi}, 
h, x]$;
\item[$(iii)$] If $h \in G'$ (or if $g \in G'$) then $[g, h^{\varphi}][h, 
g^{\varphi}]=1$;
\item[$(iv)$] $[g, [h, x]^{\varphi}]= [[h, x], g^{\varphi}]^{-1}$;
\item[$(v)$] $[[g,h^{\vfi}],[x,y^{\vfi}]] = [[g,h],[x,y]^{\vfi}]$.
\end{itemize}
\end{lem}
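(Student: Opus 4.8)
The plan is to derive all five relations from a single consequence of the defining relations, which I would isolate first. Write $T=[G,G^{\varphi}]$ for the normal subgroup generated by the symbols $[g,h^{\varphi}]$. The engine is that conjugation by $g$ and by $g^{\varphi}$ induces one and the same automorphism of $T$, i.e.
\[
w^{g}=w^{g^{\varphi}}\qquad\text{for all }w\in T,\ g\in G;
\]
call this $(\ast)$. It is immediate on the generators of $T$, since the defining relation reads $[g_1,g_2^{\varphi}]^{g_3}=[g_1,g_2^{\varphi}]^{g_3^{\varphi}}$, and it propagates to arbitrary $w\in T$ because conjugation is an automorphism of the normal subgroup $T$. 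I would also record the two further forms of the defining relation that I will use repeatedly: the ``push-in'' rule $[g,h^{\varphi}]^{c}=[g^{c},(h^{c})^{\varphi}]$ for $c\in G$, and the near-bilinearity rule $[g,(hh_1)^{\varphi}]=[g,h_1^{\varphi}]\,[g,h^{\varphi}]^{h_1}$, obtained by translating the tensor relation for $g\otimes hh_1$ and then applying the push-in rule.

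Granting $(\ast)$, relation $(i)$ is essentially automatic: expanding $[x,y^{\varphi}]=x^{-1}(y^{-1})^{\varphi}x\,y^{\varphi}$ and conjugating $w=[g,h^{\varphi}]\in T$ one factor at a time, every occurrence of $y^{\varphi}$ or $(y^{-1})^{\varphi}$ can be traded for $y$ or $y^{-1}$ by $(\ast)$ (all intermediate products remain in $T$), and the word collapses to $w^{[x,y]}$. The same principle disposes of the two ``outer'' equalities inside $(ii)$, namely $[g,h^{\varphi},x^{\varphi}]=[g,h^{\varphi},x]$ and its companion $[g^{\varphi},h,x^{\varphi}]=[g^{\varphi},h,x]$, since there one is merely commuting an element already lying in $T$ (such as $[g,h^{\varphi}]$ or $[g^{\varphi},h]$) with $x$ against $x^{\varphi}$.

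The genuine content sits in the ``$\varphi$-migration'' equalities, where $\varphi$ jumps between the first two commutator entries: the inner equalities of $(ii)$ such as $[g,h,x^{\varphi}]=[g,h^{\varphi},x]$, together with $(iv)$ and $(v)$. For these $(\ast)$ alone does not suffice, and I would instead rewrite the inner bracket via the push-in rule and the bilinearity rule, using crucially that $\varphi$ carries commutators to commutators, $[h,x]^{\varphi}=[h^{\varphi},x^{\varphi}]$. Concretely I would prove $(iv)$ first, in the equivalent form $[g,a^{\varphi}]=[a,g^{\varphi}]^{-1}$ for a single commutator $a=[h,x]$, by expanding $[g,a^{\varphi}]$ with the bilinearity rule and matching it factor by factor against the expansion of $[a,g^{\varphi}]$; this is exactly the single-commutator instance of $(iii)$. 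Relation $(iii)$ for a general $h\in G'$ then follows by induction on the number of commutator factors of $h$, each inductive step again using bilinearity and $(\ast)$, and the case $g\in G'$ is symmetric.

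Finally, for $(v)$ I would first apply $(i)$ to replace the conjugator $[x,y^{\varphi}]$ by $[x,y]$, reducing the left-hand side to $[[g,h^{\varphi}],[x,y]]=[g,h^{\varphi}]^{-1}[g^{[x,y]},(h^{[x,y]})^{\varphi}]$ via the push-in rule; the remaining task is to reconcile this with the right-hand side $[[g,h],[x,y]^{\varphi}]$, which is a computation of the same migration type using $(\ast)$ and $(iv)$. I expect the main obstacle to be precisely this migration bookkeeping: tracking at every step which factor lies in $G$, in $G^{\varphi}$, or in $T$, so that $(\ast)$ is invoked only on genuine elements of $T$, and organizing the single-commutator identities for $(iii)$ and $(iv)$ cleanly enough that the induction on commutator length proceeds without conjugation or sign errors.
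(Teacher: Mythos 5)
Your plan is correct and is essentially the paper's own approach: the paper states this lemma without proof, deferring to \cite[Section 2]{NR1}, where precisely this derivation is carried out --- the coincidence of the $G$- and $G^{\varphi}$-actions on $[G,G^{\varphi}]$ (your $(\ast)$, which is immediate from the defining relation $[g_1,g_2^{\varphi}]^{g_3}=[g_1,g_2^{\varphi}]^{g_3^{\varphi}}$ and normality of $[G,G^{\varphi}]$), the push-in rule $[g,h^{\varphi}]^{c}=[g^{c},(h^{c})^{\varphi}]$ and the biderivation expansions for the migration equalities in $(ii)$, and induction on the number of commutator factors for $(iii)$. One simplification you may wish to note: once $(ii)$ is in hand, $(iv)$ is immediate without any factor-by-factor matching, since $[g,[h,x]^{\varphi}]=[g,[h^{\varphi},x^{\varphi}]]=[h^{\varphi},x^{\varphi},g]^{-1}=[h,x,g^{\varphi}]^{-1}=[[h,x],g^{\varphi}]^{-1}$ by $(ii)$, and likewise your reduction of $(v)$ can simply cite $(ii)$ with $[x,y]$ in the third slot after applying $(i)$.
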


\begin{lem} \label{lem.closed}
Let $G$ be a group. Then $X = \{[a,b^{\varphi}] \mid \ a,b \in G \}$ is a normal commutator-closed subset of $\nu(G)$.   
\end{lem}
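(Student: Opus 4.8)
The plan is to verify two things about the set $X = \{[a,b^{\vfi}] \mid a,b \in G\}$: that it is commutator-closed, and that it is closed under conjugation by arbitrary elements of $\nu(G)$ (normality as a conjugation-invariant \emph{set}, not that $X$ is itself a subgroup). The commutator-closedness should fall out immediately from Lemma \ref{basic.nu}(v): for any two elements $[g,h^{\vfi}]$ and $[x,y^{\vfi}]$ of $X$, that identity gives $[[g,h^{\vfi}],[x,y^{\vfi}]] = [[g,h],[x,y]^{\vfi}]$, and the right-hand side is manifestly of the form $[a,b^{\vfi}]$ with $a = [g,h] \in G$ and $b = [x,y] \in G$. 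Hence every commutator of two elements of $X$ again lies in $X$, which is exactly the commutator-closed condition.

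Next I would establish that $X$ is invariant under conjugation by every element of $\nu(G)$. Since $\nu(G) = \langle G, G^{\vfi}\rangle$ is generated by $G \cup G^{\vfi}$, it suffices to check invariance under conjugation by an arbitrary $g_3 \in G$ and by an arbitrary $g_3^{\vfi} \in G^{\vfi}$; invariance under products then follows. This is precisely the content of the defining relations of $\nu(G)$, which assert $[g_1,g_2^{\vfi}]^{g_3} = [g_1^{g_3},(g_2^{g_3})^{\vfi}]$ and $[g_1,g_2^{\vfi}]^{g_3^{\vfi}} = [g_1,g_2^{\vfi}]^{g_3}$. The first shows that conjugating a generator $[g_1,g_2^{\vfi}] \in X$ by $g_3 \in G$ lands back in $X$ (the conjugates $g_1^{g_3}, g_2^{g_3}$ are again elements of $G$), and the second reduces conjugation by $G^{\vfi}$ to conjugation by $G$, which we have just handled.

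Finally I would assemble these two observations. An arbitrary element of $\nu(G)$ is a word $w = u_1 u_2 \cdots u_k$ with each $u_i \in G \cup G^{\vfi}$, and conjugation by $w$ is the composite of the conjugations by the $u_i$; since each single-generator conjugation preserves $X$ by the previous step, so does conjugation by $w$. Together with commutator-closedness this gives the claim. I do not anticipate a genuine obstacle here: the lemma is essentially a direct transcription of the defining relations together with Lemma \ref{basic.nu}(v). The only point requiring mild care is the logical one—confirming that ``normal set'' here means conjugation-closed rather than subgroup-normal—so that it is enough to verify closure of $X$ under the generating conjugations, and not that $X$ is a subgroup.
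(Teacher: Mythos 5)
Your proposal is correct and follows essentially the same route as the paper: normality is checked on the generators $G \cup G^{\vfi}$ via the defining relations $[a,b^{\vfi}]^{y^{\vfi}} = [a,b^{\vfi}]^{y} = [a^{y},(b^{y})^{\vfi}]$, and commutator-closedness is exactly Lemma \ref{basic.nu}(v). You are merely more explicit than the paper about extending conjugation-invariance from generators to arbitrary words, and about the meaning of ``normal set''; both readings agree.
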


\begin{proof}
By definition, $[a,b^{\vfi}]^{y^{\vfi}} = [a,b^{\vfi}]^y = [a^y,(b^y)^{\vfi}]$, for every $a,b,y \in G$. It follows that $X$ is a normal subset of $\nu(G)$. Now, it remains to prove that $X$ is a commutator-closed subset of $\nu(G)$. Choose arbitrarily elements $a,b,c,d \in G$ and consider the elements $[a,b^{\vfi}], [c,d^{\vfi}] \in X$. By Lemma \ref{basic.nu} $(v)$, $$[[a,b^{\varphi}],[c,d^{\varphi}]] = [[a,b],[c,d]^{\varphi}] \in X.$$ The result follows. 
\end{proof}

The epimorphism $\rho: \nu(G) \to G$, given by $g \mapsto g$, $h^{\vfi} \mapsto h$, induces the derived map $$\rho':[G,G^{\vfi}] \to G',$$ $[g,h^{\vfi}] \mapsto [g,h]$ for all $g,h \in G$. In the notation of \cite[Section 2]{NR2}, let $\mu(G)$ denote the kernel of $\rho'$. In particular, $$\dfrac{[G,G^{\vfi}]}{\mu(G)} \cong G'.$$ 

The next lemma is a particular case of Theorem 3.3 in \cite{NR1}.

\begin{lem} \label{thm.Rocco}
Let $G$ be a group. Then the derived subgroup $$\nu(G)' = ([G,G^{\vfi}] \cdot G') \cdot (G')^{\vfi},$$ where ``$\cdot$'' denotes an internal semi-direct product. 
\end{lem}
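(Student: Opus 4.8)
The plan is to prove the identity in two stages: first to show that $\nu(G)'$ coincides with the product set $H := [G,G^{\vfi}] \cdot G' \cdot (G')^{\vfi}$, and then to verify that the two products displayed are internal semidirect products. Throughout I use that $[G,G^{\vfi}]$ is normal in $\nu(G)$, which is immediate from Lemma \ref{lem.closed}: since $X = \{[a,b^{\vfi}]\}$ is a normal subset of $\nu(G)$, the subgroup $[G,G^{\vfi}] = \langle X\rangle$ it generates is normal. The three subgroups $[G,G^{\vfi}]$, $G' = [G,G]$ and $(G')^{\vfi} = [G^{\vfi},G^{\vfi}]$ are all contained in $\nu(G)'$, so $H \subseteq \nu(G)'$ and it remains to prove the reverse inclusion $\nu(G)' \subseteq H$ together with the structural claims (the latter also show that $H$ is a subgroup).

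The key tool is the pair of retractions $\pi_1,\pi_2 \colon \nu(G) \to G$ determined by $\pi_1(g) = g$, $\pi_1(h^{\vfi}) = 1$ and $\pi_2(g) = 1$, $\pi_2(h^{\vfi}) = h$ for $g,h \in G$. One checks that each $\pi_i$ respects the defining relations of $\nu(G)$ (every $[g_1,g_2^{\vfi}]$ and each of its conjugates is sent to $1$), so the $\pi_i$ are well defined homomorphisms. The combined map $(\pi_1,\pi_2)\colon \nu(G)\to G\times G$ kills $[G,G^{\vfi}]$ and is onto; moreover the surjection $\iota\colon G\times G \to \nu(G)/[G,G^{\vfi}]$, $(a,b)\mapsto \overline{a}\,\overline{b^{\vfi}}$, is a homomorphism (the images of $G$ and $G^{\vfi}$ commute modulo $[G,G^{\vfi}]$), and $(\pi_1,\pi_2)\circ\iota$ is the identity on $G\times G$. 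Hence $\iota$ is an isomorphism, $\nu(G)/[G,G^{\vfi}]\cong G\times G$ with kernel exactly $[G,G^{\vfi}]$, and $G,G^{\vfi}$ embed faithfully in $\nu(G)$. Passing to derived subgroups, the image of $\nu(G)'$ in $\nu(G)/[G,G^{\vfi}]\cong G\times G$ lies in $(G\times G)' = G'\times G'$, which is the image of $G'\cdot(G')^{\vfi}$; since $[G,G^{\vfi}]\subseteq H$, this gives $\nu(G)'\subseteq H$, and therefore $\nu(G)' = H$.

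It remains to verify the semidirect product structure. Normality of $[G,G^{\vfi}]$ makes $[G,G^{\vfi}]\cdot G'$ a subgroup, and $G'\cap [G,G^{\vfi}] = 1$ because under $(\pi_1,\pi_2)$ the subgroup $G'$ maps injectively onto $G'\times 1$ while $[G,G^{\vfi}]$ is the kernel; hence $[G,G^{\vfi}]\cdot G' = [G,G^{\vfi}]\rtimes G'$. For the outer product, the identity $g^{y^{\vfi}} = g\,[g,y^{\vfi}] \in G'\cdot [G,G^{\vfi}] = [G,G^{\vfi}]\cdot G'$ for $g,y\in G'$ shows that $(G')^{\vfi}$ normalizes $[G,G^{\vfi}]\cdot G'$, so this set is a normal subgroup of $H$. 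Finally $([G,G^{\vfi}]\cdot G')\cap (G')^{\vfi} = 1$: an element of this intersection maps under $(\pi_1,\pi_2)$ into $(G'\times 1)\cap(1\times G') = 1$, hence lies in the kernel $[G,G^{\vfi}]$, and then in $(G')^{\vfi}\cap [G,G^{\vfi}] = 1$. This yields $\nu(G)' = ([G,G^{\vfi}]\cdot G')\cdot (G')^{\vfi}$ with both products semidirect.

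The step I expect to require the most care is the structural input that $\nu(G)/[G,G^{\vfi}]\cong G\times G$ with the copies of $G$ and $G^{\vfi}$ embedding faithfully — equivalently, that the kernel of $(\pi_1,\pi_2)$ is \emph{exactly} $[G,G^{\vfi}]$ and no larger. This is precisely the content extracted from \cite[Theorem 3.3]{NR1}; once it is available, the remaining intersection and normality verifications are routine commutator manipulations via Lemma \ref{basic.nu}.
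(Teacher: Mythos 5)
Your proof is correct. The paper itself offers no argument for this lemma: it simply quotes it as a special case of \cite[Theorem 3.3]{NR1}, which establishes the analogous decomposition $\nu(G) = ([G,G^{\vfi}]\cdot G)\cdot G^{\vfi}$ for the whole group; the derived-subgroup statement is read off from that. What you have done instead is reprove the needed structure from scratch, and the argument is sound: the retractions $\pi_1,\pi_2$ are well defined because they kill every tensor relator, the section $\iota$ shows $\ker(\pi_1,\pi_2)$ is exactly $[G,G^{\vfi}]$ (so $\nu(G)/[G,G^{\vfi}]\cong G\times G$ and $G$, $G^{\vfi}$ embed), the surjectivity of $(\pi_1,\pi_2)$ identifies the image of $\nu(G)'$ with $G'\times G'$, and the trivial-intersection and normality checks for the two semidirect factors all go through as you state (with $(G')^{\vfi}$ normalizing $[G,G^{\vfi}]\cdot G'$ via $g^{y^{\vfi}}=g[g,y^{\vfi}]$ and the normality of $[G,G^{\vfi}]$ in $\nu(G)$). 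The trade-off is the expected one: the citation is shorter and situates the lemma inside Rocco's general structure theory for $\nu(G)$, while your version is self-contained and makes explicit the two facts everything hinges on, namely that $[G,G^{\vfi}]$ is precisely the kernel of $\nu(G)\twoheadrightarrow G\times G$ and that both natural copies of $G$ split off. Your closing remark slightly misattributes the kernel identification to \cite[Theorem 3.3]{NR1} as an external input, but since you in fact prove it with the $\iota$ argument, nothing is missing.
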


\begin{lem} \label{Engel}
Let $m,n$ be positive integers and $x,y$ be elements in a group $G$. If $[x,y^{\vfi}]^m$ is left $n$-Engel element in $\nu(G)$, then the power $[x,y]^m$ is left $n$-Engel in $G$.  
\end{lem}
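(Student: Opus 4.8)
The plan is to transport the Engel condition along the epimorphism $\rho : \nu(G) \to G$ introduced immediately above, exploiting the fact that $\rho$ is a surjective group homomorphism together with the observation that the property of being left $n$-Engel is inherited by homomorphic images under surjections.

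First I would record the elementary fact that if $\theta : A \to B$ is a surjective homomorphism and $a \in A$ is left $n$-Engel in $A$, then $\theta(a)$ is left $n$-Engel in $B$. Indeed, given an arbitrary $b \in B$, choose $w \in A$ with $\theta(w) = b$; since a homomorphism preserves commutators, one has $[b, {}_{n} \theta(a)] = \theta([w, {}_{n} a]) = \theta(1) = 1$. As $b$ ranges over all of $B$ and $\theta$ is onto, this shows $\theta(a)$ is left $n$-Engel in $B$.

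Next I would apply this with $\theta = \rho$, $A = \nu(G)$, $B = G$ and $a = [x, y^{\vfi}]^m$. Because $\rho$ is a homomorphism sending $x \mapsto x$ and $y^{\vfi} \mapsto y$, we compute $\rho([x, y^{\vfi}]^m) = \rho([x,y^{\vfi}])^m = [\rho(x), \rho(y^{\vfi})]^m = [x,y]^m$. Hence the hypothesis that $[x,y^{\vfi}]^m$ is left $n$-Engel in $\nu(G)$ yields at once, via the fact established in the previous paragraph, that its image $[x,y]^m$ is left $n$-Engel in $G$.

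This argument presents essentially no obstacle: the only point to keep straight is that $\rho$ is genuinely an epimorphism onto $G$, so that every element of $G$ occurs as an image $\rho(w)$ and the Engel identity transfers to all of $G$. In particular no information about the kernel $\mu(G)$, nor any finer structure of $\nu(G)$, is required.
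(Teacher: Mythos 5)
Your proposal is correct and follows essentially the same route as the paper: both push the Engel identity through the epimorphism $\rho:\nu(G)\to G$, using that $\rho([x,y^{\vfi}]^m)=[x,y]^m$ and that surjective homomorphisms preserve the left $n$-Engel property. The paper simply applies $\rho$ directly to $[z,{}_{n}[x,y^{\vfi}]^m]$ for $z\in G$, while you isolate the general transfer lemma first; the content is identical.
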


\begin{proof}
By assumption, $[z,_{n} [x,y^{\vfi}]^m] = 1$ for every $z \in G$. It follows that $$1 = \rho([z,_{n}[x,y^{\vfi}]^m]) = [z,_{n}[x,y]^m],$$
for every $z \in G$, i.e., $[x,y]^m$ is left $n$-Engel in $G$, and the lemma follows. 
\end{proof}

The remainder of this section will be devoted to describe certain finiteness conditions for residually finite and locally graded groups. Recall that a group is {\it locally graded} if every non-trivial finitely generated subgroup has a proper subgroup of finite index. Interesting classes of groups (e.g. locally finite groups, locally nilpotent groups, residually finite groups) are locally graded. \\ 

We need the following result, due to Shumyatsky \cite{shu3}. 

\begin{thm} \label{thm.1} 
Let $G$ be a residually finite group satisfying some non-trivial identity $f \equiv~1$. Suppose $G$ is generated by a normal commutator-closed set $X$ of $p$-elements. Then $G$ is locally finite.
\end{thm}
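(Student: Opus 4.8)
The plan is to prove the statement by reducing to a finitely generated group and then invoking Lie-ring methods in the spirit of Zelmanov's solution of the Restricted Burnside Problem. Since local finiteness is a local property and $X$ generates $G$, every finitely generated subgroup of $G$ lies in $H=\langle X_0\rangle$ for some finite $X_0\subseteq X$. The set $X\cap H$ is again a normal commutator-closed set of $p$-elements, and it generates $H$ because $X_0\subseteq X\cap H$; moreover $H$ inherits both residual finiteness and the identity $f\equiv 1$ from $G$. Thus it suffices to show that such an $H$ is finite, and I rename $H$ as $G$.

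First I would show that every finite quotient $\bar G$ of $G$ is a $p$-group, so that $G$ is residually-$p$. The image $\bar X$ of $X$ is still a normal commutator-closed set of $p$-elements generating $\bar G$. Because $\bar X$ is normal and commutator-closed, the derived subgroup $\bar G'$ is generated by the conjugates $[\bar x,\bar y]^{\bar g}$ with $\bar x,\bar y\in \bar X$, all of which again lie in $\bar X$; hence $\bar G'$ is a group of the same type, while $\bar G/\bar G'$ is an abelian $p$-group. Iterating along the derived series shows that a \emph{solvable} group of this type is a $p$-group, so it remains to prove that $\bar G$ is solvable. This is where the identity enters: by Jones' theorem only finitely many finite non-abelian simple groups satisfy $f\equiv 1$, and one checks that none of them can be generated by a normal commutator-closed set of $p$-elements, since the commutators of the generators would have to remain $p$-elements, which fails in a non-abelian simple group. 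Hence $\bar G$ has no non-abelian composition factor, is solvable, and therefore a $p$-group.

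Now $G$ is a finitely generated residually-$p$ group, and I would pass to the restricted Lie algebra $L_p(G)=\bigoplus_i D_i/D_{i+1}$ over $\mathbb{F}_p$ associated with the Jennings (lower $p$-central) series, where residual $p$-finiteness gives $\bigcap_i D_i=1$. Write $a_i$ for the image of $x_i$. Since $G$ satisfies the group identity $f\equiv 1$, the theorem of Wilson and Zelmanov shows that $L_p(G)$ satisfies a polynomial identity. Next, each $a_i$ is ad-nilpotent: in the restricted structure $(\mathrm{ad}\,a_i)^{p^{k}}=\mathrm{ad}(a_i^{[p^{k}]})=\mathrm{ad}(\overline{x_i^{p^{k}}})=0$ as soon as $p^{k}$ annihilates $x_i$, and the same computation applies to every Lie commutator in the $a_i$, because commutator-closedness forces the corresponding group commutator to lie in $X$ and hence to be a $p$-element. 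Thus $L_p(G)$ is a finitely generated PI Lie algebra in which every commutator in the generators is ad-nilpotent, so Zelmanov's nilpotency theorem applies and $L_p(G)$ is nilpotent. Finite generation together with the nilpotence of the restricted $p$-operation on the finite-order generators then forces the Jennings series to terminate, so $G$ is a finite $p$-group. This proves that the original $G$ is locally finite.

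The main obstacle is the passage from the group to the Lie algebra and the correct application of Zelmanov's theorem: one must verify carefully that $L_p(G)$ is PI and that the ad-nilpotency of the generators and of their commutators holds with enough control to invoke the nilpotency criterion, and then translate nilpotence of $L_p(G)$ back into finiteness of $G$ via the Jennings correspondence. A secondary but genuinely delicate point is the reduction to the residually-$p$ case, since excluding non-abelian simple sections relies on Jones' theorem, and hence ultimately on the classification of finite simple groups. By contrast, the reduction to the finitely generated case and the derived-series argument showing that a solvable group of this type is a $p$-group are routine.
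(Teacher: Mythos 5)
There is a genuine gap here --- in fact two --- but first a point of comparison: the paper does not prove this statement at all; Theorem \ref{thm.1} is imported verbatim from Shumyatsky \cite{shu3}, so your attempt can only be measured against that source and against the analogous machinery this paper runs in Proposition \ref{prop.dash}. Your scaffolding (reduction to a subgroup generated by finitely many elements of $X$, reduction to the residually-$p$ case, PI for the Lie algebra via Wilson--Zelmanov, ad-nilpotency of Lie commutators via Lazard's lemma plus commutator-closedness, then Zelmanov's nilpotency theorem) is indeed the standard route. The first gap is in your reduction to residually-$p$: the sentence ``one checks that none of them can be generated by a normal commutator-closed set of $p$-elements, since the commutators of the generators would have to remain $p$-elements, which fails in a non-abelian simple group'' is circular --- that failure is exactly the assertion requiring proof, and it is not routine. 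Jones' theorem only bounds the candidate simple groups; it does not exclude them. Moreover the surrounding logic is off: an arbitrary composition factor of $\bar{G}$ is not a quotient of a subgroup generated by a normal commutator-closed subset of $\bar{X}$; only quotients of $\bar{G}$ inherit such a generating set. This part is repairable (a minimal counterexample is perfect, hence has a simple quotient generated by the image of $X$; the Baer--Suzuki theorem then handles all pairs generating proper subgroups), but one is still left with the case of a simple group generated by two elements of $X$, which demands a real argument you have not supplied.

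The second gap is the endgame, and it is fatal as written. From nilpotency of $L_p(G)$ you infer that ``the Jennings series terminates, so $G$ is a finite $p$-group.'' This inference is false without first knowing that $G$ is torsion, and nothing in your argument establishes that: for $G=\mathbb{Z}$ the algebra $L_p(G)$ is one-dimensional abelian, yet $D_i(\mathbb{Z})$ runs through the subgroups $p^k\mathbb{Z}$ and never stabilizes, and $G$ is infinite. The finite orders of the generators do not control the orders of all elements of $G$; indeed, proving $G$ is torsion is essentially the content of the theorem. The correct endgame --- the one used both in \cite{shu3} and, for the Engel analogue, in Proposition \ref{prop.dash} of this paper --- is: nilpotency of $L_p$ gives, by Lazard's theorem (Theorem \ref{3}), that the pro-$p$ completion is $p$-adic analytic, hence $G$ is linear; the identity $f\equiv 1$ then enters a \emph{second} time, through Tits' alternative, to exclude free subgroups of rank $2$ and conclude that $G$ is virtually soluble; finally an elementary derived-series induction shows that a finitely generated soluble group generated by a normal commutator-closed set of $p$-elements is a finite $p$-group. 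Since you never invoke linearity or Tits' alternative, your proof does not close, even though everything up to the nilpotency of $L_p(G)$ is essentially sound.
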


Next, we extend this result to the class of locally graded groups.   

\begin{lem} \label{graded}
Let $p$ be a prime. Let $G$ be a locally graded group satisfying some non-trivial identity $f \equiv~1$. Suppose that $G$ is generated by finitely many elements of finite order and for every $x,y \in G$ there exists a $p$-power $q=q(x,y)$ such that $[x,y]^q = 1$. Then $G$ is finite.  
\end{lem}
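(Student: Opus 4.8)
The plan is to reduce to the residually finite case and then invoke Theorem \ref{thm.1}. Let $R$ denote the finite residual of $G$, that is, the intersection of all normal subgroups of finite index, so that $G/R$ is residually finite. The quotient $G/R$ inherits all of the hypotheses: it still satisfies the identity $f\equiv 1$, it is generated by the (finitely many, finite-order) images of the generators of $G$, and for all $\bar x,\bar y\in G/R$ one has $[\bar x,\bar y]^{q}=\overline{[x,y]^{q}}=1$ for a suitable $p$-power $q=q(x,y)$. I would first show that $G/R$ is finite, and afterwards use local gradedness to force $R=1$.

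To see that $G/R$ is finite, consider the set $Y=\{[\bar a,\bar b]\mid \bar a,\bar b\in G/R\}$ of all commutators. Exactly as in Lemma \ref{lem.closed}, $Y$ is a normal, commutator-closed subset of $G/R$; by hypothesis every element of $Y$ is a $p$-element, and $Y$ generates the derived subgroup $(G/R)'$. Since $(G/R)'$ is residually finite (being a subgroup of $G/R$) and satisfies $f\equiv 1$, Theorem \ref{thm.1} applies to $(G/R)'$ with the generating set $Y$ and shows that $(G/R)'$ is locally finite. On the other hand, the abelianization $(G/R)/(G/R)'$ is a finitely generated abelian group generated by elements of finite order, hence finite; thus $(G/R)'$ has finite index in $G/R$ and is in particular finitely generated. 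A finitely generated locally finite group is finite, so $(G/R)'$ is finite and therefore $G/R$ is finite.

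It then remains to prove that $R=1$. Since $G/R$ is finite, $R$ is a finite-index normal subgroup of $G$; as a finite-index subgroup of a finitely generated group it is itself finitely generated, and it is locally graded as a subgroup of $G$. Suppose, for contradiction, that $R\neq 1$. Then $R$ has a proper subgroup of finite index, say of index $k$. Because $R$ is finitely generated it has only finitely many subgroups of index $k$; their intersection $M$ is a characteristic subgroup of $R$ of finite index with $M\neq R$. As $R\trianglelefteq G$, conjugation by elements of $G$ induces automorphisms of $R$, so $M$ is normal in $G$ and of finite index in $G$, yet $M\subsetneq R$. This contradicts the description of $R$ as the intersection of all finite-index normal subgroups of $G$. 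Hence $R=1$, so $G\cong G/R$ is finite.

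The routine parts (that quotients and subgroups preserve the hypotheses, and that a finitely generated locally finite group is finite) are immediate. I expect the only genuinely delicate point to be the passage from ``locally graded'' to ``residually finite'': the whole argument hinges on identifying $G/R$ as the correct residually finite quotient and on the core-type argument of the last paragraph that rules out a nontrivial finite residual. Once Theorem \ref{thm.1} is in hand, everything else is bookkeeping.
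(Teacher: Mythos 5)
Your proposal is correct and follows essentially the same route as the paper: pass to the finite residual $R$, apply Theorem \ref{thm.1} to the normal commutator-closed set of commutators to get that the derived subgroup is locally finite, deduce that $G/R$ is finite, and then use local gradedness of the finitely generated subgroup $R$ to force $R=1$. Your version merely spells out two details the paper leaves implicit (working uniformly in $G/R$ rather than splitting off the case $R=1$, and the core argument producing a characteristic finite-index subgroup of $R$), but the decomposition and key lemma are identical.
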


\begin{proof}
Let $R$ be the finite residual of $G$, i.e., the intersection of all subgroups of finite index in $G$. If $R=1$, then $G$ is residually finite. Since the set of all commutators is a normal commutator-closed subset of $p$-elements, we have $G'$ is locally finite (Theorem \ref{thm.1}). Moreover, $G/G'$ is generated by finitely many elements of finite order. It follows that $G/G'$ is finite and so $G'$ is finitely generated. Consequently, $G$ is finite. So, we can assume that $R \neq 1$. By the same argument, $G/R$ is finite and thus $R$ is finitely generated. As $R$ is locally graded we have that $R$ contains a proper subgroup of finite index in $G$, which gives a contradiction. 
\end{proof}

It is easy to see that a quotient of a residually finite group need not be residually finite (see for instance \cite[6.1.9]{Rob}).  This includes the class of locally graded groups. However, the next result gives a sufficient condition for a quotient to be locally graded.

\begin{lem} (Longobardi, Maj, Smith, \cite{LMS}) \label{quotient}
Let $G$ be a locally graded group and $N$ a normal locally nilpotent subgroup of $G$. Then $G/N$ is locally graded.
\end{lem}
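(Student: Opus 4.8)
The plan is to reduce the statement to a finiteness question about a single finitely generated group and then attack that by passing to the profinite topology. First I would unwind the definition: let $H/N$ be a non-trivial finitely generated subgroup of $G/N$, choose $h_1,\dots,h_k\in H$ whose images generate $H/N$, and set $K=\langle h_1,\dots,h_k\rangle\le G$ and $M=K\cap N$. Then $K$ is finitely generated, $M\trianglelefteq K$ is locally nilpotent (a subgroup of $N$), $M\ne K$, and $H/N\cong K/M$; moreover $K$ is itself locally graded, since any finitely generated subgroup of $K$ is one of $G$. Because coring a finite-index subgroup produces a finite-index normal one, it suffices to prove that $K/M$ has a non-trivial finite quotient, i.e. that $MK_1\ne K$ for some finite-index normal $K_1\trianglelefteq K$.

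So assume, for contradiction, that $MK_1=K$ for every finite-index normal subgroup $K_1$ of $K$. The first key observation uses local nilpotency of $M$: if $F=K/K_1$ is any finite quotient, then $MK_1=K$ forces $F$ to be a homomorphic image of $M$; choosing finitely many elements of $M$ that map onto a generating set of $F$, they generate a finitely generated, hence nilpotent, subgroup of $M$, and therefore $F$ is nilpotent. Thus every finite quotient of $K$ is nilpotent. Since $K$ is finitely generated, non-trivial and locally graded, it has a proper finite-index subgroup, so its finite residual $R$ (the intersection of all finite-index subgroups) is proper; then $\bar K:=K/R$ is a non-trivial finitely generated residually finite group all of whose finite quotients are nilpotent, and hence $\bar K$ is residually nilpotent, $\bigcap_i\gamma_i(\bar K)=1$.

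Next I would exploit that the image $\bar M$ of $M$ is dense in $\bar K$ for the full profinite topology (this is precisely the hypothesis $MK_1=K$). Applying density in the finitely generated abelian group $\bar K/\gamma_2(\bar K)$ shows $\bar M\,\gamma_2(\bar K)=\bar K$, and a standard commutator-calculus (Frattini-type) induction along the lower central series, using that each $\gamma_i(\bar K)/\gamma_{i+1}(\bar K)$ is finitely generated abelian and that commutators of elements of the normal subgroup $\bar M$ again lie in $\bar M$, upgrades this to $\bar M\,\gamma_i(\bar K)=\bar K$ for all $i$. Combined with $\bigcap_i\gamma_i(\bar K)=1$, the goal is to conclude that $\bar M=\bar K$, which gives $MR=K$ and hence $K/M\cong R/(R\cap M)$; re-examined against $M\ne K$ and the local gradedness now carried by the locally nilpotent residual, this yields the desired contradiction.

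The main obstacle is exactly this last step: passing from ``$\bar M$ is dense'' to ``$\bar M=\bar K$''. A normal subgroup of a finitely generated residually finite group need not be closed in the profinite topology, and a proper dense subgroup is in general perfectly possible; the Frattini/non-generator argument closes the abelian layers but does not by itself terminate up the lower central series. This is where local nilpotency of $M$ must be used decisively---for instance through the normalizer condition and the structure of the associated Lie ring $\bigoplus_i\gamma_i(\bar K)/\gamma_{i+1}(\bar K)$---to rule out a proper dense locally nilpotent normal subgroup. I expect controlling this interaction between the merely residually nilpotent ambient group and its dense locally nilpotent normal subgroup to be the technical heart of the proof, and it is what genuinely distinguishes the locally nilpotent hypothesis on $N$ from a weaker one.
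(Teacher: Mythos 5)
The paper offers no proof of this lemma at all---it is quoted from Longobardi, Maj and Smith \cite{LMS}---so your attempt has to be judged as a proof of the cited result itself, and judged that way it is incomplete, as you yourself acknowledge. The reductions you carry out are all correct: replacing $H/N$ by $K/M$ with $K$ finitely generated and $M=K\cap N$; the observation that if $MK_1=K$ for every finite-index normal $K_1$ then every finite quotient of $K$ is an image of a finitely generated (hence nilpotent) subgroup of $M$ and is therefore nilpotent; the passage to $\bar K=K/R$, which is finitely generated, non-trivial and residually nilpotent; and the Frattini-type climb $\bar M\gamma_i(\bar K)=\bar K$ for all $i$. But none of this is where the difficulty lies. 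The entire content of the lemma is concentrated in the step you explicitly defer: ruling out a \emph{proper} dense normal locally nilpotent subgroup of a finitely generated residually nilpotent locally graded group. Density in the profinite topology does not give equality---a finitely generated residually finite group can perfectly well have a proper dense normal subgroup (any kernel of a map onto a finitely generated group with no finite quotients)---and you offer no actual mechanism (normalizer condition, associated Lie ring, or otherwise) that exploits local nilpotency to close the gap. As written, the argument establishes only the easy preliminary reductions.

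Even granting the unproved claim $\bar M=\bar K$, the proposed endgame does not visibly terminate. From $MR=K$ you get $K/R\cong M/(M\cap R)$ finitely generated and locally nilpotent, hence nilpotent, and $K/M\cong R/(R\cap M)$; but $R$ is not known to be finitely generated (a normal subgroup with polycyclic quotient in a finitely generated group need not be finitely generated, e.g.\ the derived subgroup of a free group of rank $2$), it is not known to be locally nilpotent, and $R/(R\cap M)\cong K/M$ still has no proper finite-index subgroups by hypothesis---so neither local gradedness nor local nilpotency applies to it directly to produce the contradiction you announce. In short, the skeleton is a reasonable way to begin, but the published proof in \cite{LMS} is devoted precisely to the interaction you leave open; without that step this is a genuine gap rather than a sketch with routine details omitted.
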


\section{On Lie Algebras Associated with Groups}

Let $L$ be a Lie algebra over a field $\mathbb{K}$.
We use the left normed notation: thus if
$l_1,l_2,\dots,l_n$ are elements of $L$, then
$$[l_1,l_2,\dots,l_n]=[\dots[[l_1,l_2],l_3],\dots,l_n].$$
We recall that an element $a\in L$ is called {\it ad-nilpotent} if there exists a positive integer $n$ such that $[x,{}_na]=0$ for all $x\in L$. When $n$ is the least integer with the above property then we say that $a$ is ad-nilpotent of index $n$. 

Let $X\subseteq L$ be any subset of $L$. By a commutator of elements in $X$,
we mean any element of $L$ that could be obtained from
elements of $X$ by means of repeated operation of
commutation with an arbitrary system of brackets
including the elements of $X$. Denote by $F$ the free
Lie algebra over $\mathbb{K}$ on countably many free
generators $x_1,x_2,\dots$. Let $f=f(x_1,x_2,
\dots,x_n)$ be a non-zero element of $F$. The algebra
$L$ is said to satisfy the identity $f=0$ if
$f(l_1,l_2,\dots,l_n)=0$ for any $l_1,l_2,\dots,l_n
\in L$. In this case we say that $L$ is PI. Now, we recall an important theorem of Zelmanov 
\cite[Theorem 3]{zelm} that has many applications in roup Theory.
\begin {theorem}\label{1} 
Let $L$ be a Lie algebra generated by $l_1,l_2,\dots,l_m$. Assume that $L$ is PI and that each commutator in the generators is ad-nilpotent. Then $L$ is nilpotent.
\end{theorem}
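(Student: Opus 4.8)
The plan is first to recognize that this is Zelmanov's global nilpotency theorem, one of the deepest ingredients behind the positive solution of the Restricted Burnside Problem, so that a genuinely self-contained proof is out of reach; instead I would organize the argument around the \emph{sandwich method} of Kostrikin and Zelmanov. Recall that an element $c \in L$ is called a \emph{sandwich} if $(\mathrm{ad}\,c)^2 = 0$ and $\mathrm{ad}\,c\,\mathrm{ad}\,x\,\mathrm{ad}\,c = 0$ for every $x \in L$; equivalently $[[z,c],c]=0$ and $[[[z,c],x],c]=0$ for all $x,z \in L$. The load-bearing black box is the Kostrikin--Zelmanov theorem asserting that a Lie algebra generated by finitely many sandwiches is nilpotent, with class bounded in terms of the number of generators.

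Granting that, I would proceed in two stages. The first stage is to manufacture sandwiches from the hypotheses: here I would exploit that $L$ is PI together with the ad-nilpotency of each commutator in $l_1,\dots,l_m$. Linearizing the identity $f=0$ and feeding in the relations $(\mathrm{ad}\,c)^{n_c}=0$ coming from ad-nilpotency, one performs a multilinear analysis of products of adjoint operators that forces suitable combinations to annihilate $L$, which is precisely the assertion that certain nonzero elements built from the generators are sandwiches. The second stage is an inductive descent: a nonzero sandwich generates an ideal $I$ which, by the sandwich theorem applied to the finitely generated sandwich subalgebra, is nilpotent; one then passes to $L/I$, which remains PI and finitely generated by ad-nilpotent images, and shows that a suitable complexity parameter (the ad-nilpotency indices together with the degree of the identity) strictly drops. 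Induction on that parameter, combined with the fact that a nilpotent ideal with nilpotent quotient yields a nilpotent algebra in this finitely generated setting, gives nilpotency of $L$.

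The hard part, by a wide margin, will be the sandwich-extraction of the first stage: there is no elementary route, and it rests on Zelmanov's structure theory relating Lie algebras to Jordan algebras together with his fine analysis of thin sandwiches. For the purposes of the present paper I would accordingly not reproduce any of this and would simply cite \cite{zelm}, using the conclusion that $L$ is nilpotent as a black box in the Lie-theoretic step of the proof of Theorem B.
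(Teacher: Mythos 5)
The paper gives no proof of this statement at all: it is quoted verbatim as Zelmanov's theorem and cited as \cite[Theorem 3]{zelm}, exactly the black-box treatment you arrive at in your final paragraph. Your accompanying sketch of the sandwich method is a fair (if necessarily impressionistic) description of the genuine strategy behind Zelmanov's proof, but since both you and the authors ultimately just invoke the citation, the approaches coincide.
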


Let $G$ be a group and $p$ a prime. In what follows, $$D_i=D_i(G) = \displaystyle \prod_{jp^k \geqslant i} (\gamma_j(G))^{p^k} $$ 
denotes the $i$-th dimension subgroup of $G$ in characteristic
$p$. These subgroups form a central series of $G$
known as the {\it Zassenhaus-Jennings-Lazard series} (this can be found in Shumyatsky \cite[Section 2]{shu2}). Set $L(G)=\bigoplus D_i/D_{i+1}$. 
Then $L(G)$ can naturally be viewed as a Lie algebra 
over the field ${\mathbb F}_p$ with $p$ elements. The subalgebra of $L(G)$ generated by $D_1/D_2$ will be 
denoted by $L_p(G)$. The nilpotency of $L_p(G)$ has strong influence in the structure of a finitely generated group $G$. The following result is due to Lazard \cite{l2}.

\begin{thm}\label{3} 
Let $G$ be a finitely generated pro-$p$
group. If $L_p(G)$ is nilpotent, then $G$ is
$p$-adic analytic.
\end{thm}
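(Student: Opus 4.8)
The plan is to treat this as Lazard's classical criterion and to reconstruct its architecture in three stages: first extract finite-dimensionality of the associated Lie algebra from the hypotheses, then convert this into a finiteness (finite rank) statement about the group, and finally upgrade finite rank to genuine $p$-adic analyticity.

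First I would observe that since $G$ is finitely generated as a pro-$p$ group, the Frattini quotient $D_1/D_2 = G/\Phi(G)$ is a finite-dimensional vector space over $\mathbb{F}_p$, of dimension $d(G)$. By hypothesis $L_p(G)$, which is by definition generated by $D_1/D_2$, is nilpotent; since a Lie algebra generated by a finite-dimensional space and of finite lower-central length is itself finite-dimensional, we conclude that $L_p(G)$ is finite-dimensional over $\mathbb{F}_p$. (As $L_p(G)$ is graded and generated in degree one, nilpotency and finite-dimensionality are in fact equivalent here: the top graded piece vanishes precisely when all sufficiently long commutators of the degree-one generators vanish.)

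Next I would pass from this Lie-theoretic finiteness to a structural finiteness of $G$. The graded pieces $D_i/D_{i+1}$ are $\mathbb{F}_p$-spaces whose dimensions record the growth of the dimension series, and finiteness of $L_p(G)$ forces these dimensions $\dim_{\mathbb{F}_p} D_i/D_{i+1}$ to be uniformly bounded (the restricted $p$-power map contributes only controlled dimension in each degree once the ordinary brackets are bounded). The goal of this step is to deduce that $G$ has finite rank, i.e. that $d(H)=\dim_{\mathbb{F}_p} H/\Phi(H)$ is bounded over all open subgroups $H\le G$; equivalently, that $G$ contains an open \emph{uniformly powerful} subgroup. This is the part of the theory where the combinatorics of the Zassenhaus--Jennings--Lazard filtration is converted into the existence of a well-behaved $\mathbb{Z}_p$-Lie lattice inside $G$.

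Finally, on this uniformly powerful open subgroup one introduces global $\mathbb{Z}_p$-valued coordinates via the logarithm map and checks, through the Campbell--Hausdorff series (whose $p$-adic convergence is guaranteed by the powerful structure), that multiplication and inversion are given by convergent power series in those coordinates; this produces an analytic atlas and makes $G$ a $p$-adic analytic group. I expect the genuinely hard part to be precisely this last passage: the combinatorial input (nilpotent $L_p(G)$, hence bounded dimension-subgroup quotients) is comparatively soft, whereas constructing the analytic manifold structure — convergence of Campbell--Hausdorff, the $\log/\exp$ correspondence, and the equivalence ``finite rank $\Leftrightarrow$ $p$-adic analytic'' — is the technical heart of Lazard's work. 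For this reason one does not reprove it in a paper of this kind but invokes Lazard \cite{l2} directly; the outline above records the conceptual route by which the hypothesis that $L_p(G)$ is nilpotent delivers analyticity.
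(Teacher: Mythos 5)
The paper does not prove this statement: it is imported directly from Lazard \cite{l2}, and your proposal ultimately does the same, so at the level of actual argument there is nothing to compare. The outline you give (nilpotency plus generation in degree one forces $L_p(G)$ to be finite-dimensional, hence the dimension-subgroup quotients have bounded dimension, hence $G$ has an open uniformly powerful subgroup, hence analytic coordinates via $\log$/$\exp$ and the Campbell--Hausdorff series) is the standard modern route and is accurate as a sketch, with the genuinely hard step correctly identified and correctly delegated to the reference.
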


Let $x\in G$, and let $i=i(x)$ be the largest integer such
that $x\in D_i$. We denote by ${\tilde x}$ the 
element $xD_{i+1}\in L(G)$. Now, we can state one condition for ${\tilde x}$ to be ad-nilpotent.

\begin{lem} \label{4}(Lazard, \cite[page 131]{la}) 
For any $x\in G$ we have
$(ad\,{\tilde x})^q=ad\,(\widetilde {x^q})$.
\end{lem}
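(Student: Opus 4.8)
The plan is to realize $L(G)$ inside an associative algebra in which both the bracket and the $p$-power map become visible, and then to reduce the identity to an elementary fact about the operator $ad$ in characteristic $p$. Throughout I take $q=p^{k}$ to be a power of $p$; this is forced by the characteristic-$p$ setting and is exactly where the hypothesis enters. Recall that the Zassenhaus-Jennings-Lazard series is described through the augmentation ideal $\Delta$ of the group algebra $\mathbb{F}_p[G]$, namely $D_i=\{g\in G:\ g-1\in\Delta^{i}\}$, and that the associated graded algebra $A=\bigoplus_i\Delta^{i}/\Delta^{i+1}$ is, by Quillen's theorem, the restricted universal enveloping algebra of $L(G)$. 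Under the resulting embedding $L(G)\hookrightarrow A$ the class $\tilde{x}\in D_i/D_{i+1}$ corresponds to $\bar{x}:=(x-1)+\Delta^{i+1}\in\Delta^{i}/\Delta^{i+1}$, the Lie bracket corresponds to the associative commutator $\bar{x}\bar{y}-\bar{y}\bar{x}$, and the restricted $p$-operation corresponds to the associative $p$-th power.

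The first step is the computation linking group powers to $p$-th powers in $A$. Expanding $(x-1+1)^{p}$ in $\mathbb{F}_p[G]$ and using $\binom{p}{j}\equiv 0 \pmod p$ for $0<j<p$ gives $x^{p}-1=(x-1)^{p}$; iterating yields $x^{q}-1=(x-1)^{q}$ for every $p$-power $q$. Hence, if $x\in D_i$, then $(x-1)^{q}\in\Delta^{iq}$, so $x^{q}\in D_{iq}$ and, reducing modulo $\Delta^{iq+1}$, the image of $x^{q}$ agrees with $\bar{x}^{q}$; that is, $\widetilde{x^{q}}=\bar{x}^{q}=\tilde{x}^{[q]}$ is the $k$-fold iterate of the restricted $p$-operation applied to $\tilde{x}$.

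The second step is the associative identity for $ad$. For $a\in A$ write $ad(a)=L_a-R_a$, where $L_a$ and $R_a$ denote left and right multiplication by $a$; these commute, so the Frobenius identity in characteristic $p$ gives $ad(a)^{q}=(L_a-R_a)^{q}=L_a^{q}-R_a^{q}=L_{a^{q}}-R_{a^{q}}=ad(a^{q})$. Applying this with $a=\bar{x}$ and combining with the first step, I obtain $(ad\,\tilde{x})^{q}=ad(\bar{x}^{q})=ad(\widetilde{x^{q}})$ as operators on $L(G)$, which is the assertion.

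The delicate point, and the place where I would be most careful, is the identification $\widetilde{x^{q}}=\bar{x}^{q}$ in the first step. A priori $\bar{x}^{q}$ may vanish in $A$ (when $x^{q}$ lies deeper in the series than $D_{iq}$), so one must fix the homogeneous degree at $iq$ and read $\widetilde{x^{q}}$ as the image of $x^{q}$ in $D_{iq}/D_{iq+1}$ rather than as its leading term of possibly higher degree; with that convention both sides are the degree-$iq$ operator and the equality is exact. Setting up Quillen's identification of $A$ with the restricted enveloping algebra, so that the $p$-operation of $L(G)$ really is the $p$-th power in $A$, is the one external ingredient I would need to quote carefully; everything else reduces to the two elementary characteristic-$p$ computations above.
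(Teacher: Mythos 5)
Your argument is correct, but note that the paper offers no proof of this lemma at all: it is quoted from Lazard, so there is nothing internal to compare against. What you have written is essentially the standard (indeed Lazard's own) argument, carried out in the graded group algebra. The two computations at its core are right: $x^{q}-1=(x-1)^{q}$ for $q$ a $p$-power, since $x$ commutes with $1$ and the middle binomial coefficients vanish mod $p$; and $(\mathrm{ad}\,a)^{q}=\mathrm{ad}(a^{q})$ in any associative $\mathbb{F}_p$-algebra, since $L_a$ and $R_a$ commute. You are also right that the identity only makes literal sense for $q$ a $p$-power (for $q$ coprime to $p$ one instead gets $\widetilde{x^{q}}=q\tilde{x}$, which is how the paper's Remark 3.5 reduces the general case to this one), and this matches how the lemma is used in Proposition \ref{prop.dash}, where $q$ divides $p^m$. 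Two smaller comments. First, the delicate point you flag --- that $\widetilde{x^{q}}$ must be read as the image of $x^{q}$ in $D_{iq}/D_{iq+1}$, which may be $0$, rather than as the leading term in a possibly higher layer --- is exactly the right convention; it is harmless for the application, since when $x^{q}$ drops below $D_{iq}$ one gets $(\mathrm{ad}\,\tilde{x})^{q}=0$ and the desired ad-nilpotency for free. Second, you invoke more than you need: Quillen's identification of $\bigoplus\Delta^{i}/\Delta^{i+1}$ with the restricted enveloping algebra is not required, only the Jennings--Lazard description $D_i=G\cap(1+\Delta^{i})$, which makes $\tilde{x}\mapsto(x-1)+\Delta^{i+1}$ a well-defined injective Lie homomorphism of $L(G)$ into the associated graded algebra; all powers in your computation are then ordinary associative powers there, and injectivity is what lets you pull the resulting operator identity back to $L(G)$.
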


\begin{rem}
We note that $q$ in Lemma \ref{4} does not need to be a 
$p$-power. In fact, is easy to see that if $p^s$ is the 
maximal $p$-power dividing $q$, then $\tilde x$ is 
ad-nilpotent of index at most $p^s$.
\end{rem}

The following result is an immediate corollary of \cite[Theorem 1]{wize}.
\begin{lem}\label{2}
Let $G$ be any group satisfying a group-law. Then $L(G)$ is $PI$.
\end{lem}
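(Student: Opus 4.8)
The plan is to deduce the statement directly from the Wilson--Zelmanov correspondence \cite{wize}, without reproving any of its internal machinery. Recall that $L(G)=\bigoplus D_i/D_{i+1}$ is the Lie algebra over $\mathbb{F}_p$ associated with the Zassenhaus--Jennings--Lazard series, and that asserting that $L(G)$ is PI amounts to exhibiting a single non-zero element $f$ of the free Lie algebra over $\mathbb{F}_p$ that vanishes under every substitution of elements of $L(G)$. By hypothesis $G$ satisfies a group-law, i.e.\ there is a non-trivial reduced word $w=w(x_1,\dots,x_k)$ with $w(g_1,\dots,g_k)=1$ for all $g_1,\dots,g_k\in G$; the task is thus to convert this group-theoretic identity into a Lie-theoretic one.

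The key step is to invoke \cite[Theorem 1]{wize}. That theorem is stated for a group $G$ satisfying a \emph{coset identity}: there exist a finite-index subgroup $H$, elements $g_1,\dots,g_k$, and a non-trivial word $w$ with $w(g_1h_1,\dots,g_kh_k)=1$ for all $h_1,\dots,h_k\in H$; its conclusion is that the associated Lie algebra satisfies a non-trivial Lie polynomial identity. A group-law is precisely the special case of a coset identity obtained by taking $H=G$ and $g_1=\dots=g_k=1$, so that $w(h_1,\dots,h_k)=1$ for all $h_i\in G$. Hence the hypothesis of \cite[Theorem 1]{wize} is satisfied, its conclusion is exactly that $L(G)$ is PI, and the lemma follows at once. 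This is what is meant by calling it an \emph{immediate corollary}: the only verification required on our side is the trivial observation that a group-law is a coset identity.

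I expect the genuine difficulty to lie entirely inside \cite{wize} and thus to be bypassed by citation. The dimension subgroups $D_i$ are characteristic and verbally defined, so the identity $w\equiv1$ is inherited by every section of $G$ and is compatible with the grading; from this one can manufacture homogeneous relations among the elements $\tilde{x}$ of $L(G)$. Producing \emph{some} such relation is routine, but the technical heart of Wilson--Zelmanov — the part I would not attempt to reprove — is controlling the multilinearisation and the leading terms so that the extracted Lie polynomial is guaranteed to be non-zero in the free Lie algebra over $\mathbb{F}_p$. Once that is granted, no further argument is needed, and in particular every subalgebra of $L(G)$, such as $L_p(G)$, is PI as well.
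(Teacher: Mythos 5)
Your proposal is correct and matches the paper exactly: the paper offers no argument beyond calling the lemma ``an immediate corollary of \cite[Theorem 1]{wize}'', and your reduction --- observing that a group-law $w(h_1,\dots,h_k)=1$ for all $h_i\in G$ is the special case of a coset identity with $H=G$ and $g_1=\dots=g_k=1$ --- is precisely the intended one-line verification. The hard content is indeed entirely inside Wilson--Zelmanov, so nothing further is required.
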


For a deeper discussion of applications of Lie methods to group theory we refer the reader to \cite{shu2}.

\section{Proof of the main results}

By a well-known theorem of Schmidt \cite[14.3.1]{Rob}, the class of locally finite groups is closed with respect to extension. By Lemma \ref{thm.Rocco}, $\nu(G)' = ([G,G^{\vfi}] \cdot G') \cdot (G')^{\vfi}$. From this and Proposition \ref{prop.g} we conclude that the non-abelian tensor square $G \otimes G$ is locally finite if and only if $\nu(G)'$ is locally finite. The next result will be needed in the proof of Theorem A.

\begin{prop} \label{prop.1}
Let $G$ be a group in which $G'$ is locally finite. The following properties are equivalents.
\begin{itemize}
 \item[$(a)$] $[G,G^{\vfi}]$ is locally finite;
 \item[$(b)$] $\nu(G)'$ is locally finite;
 \item[$(c)$] For every $x,y \in G$ there exists a positive integer $m=m(x,y)$ such that $[x,y^{\vfi}]^m = 1$.
\end{itemize}
\end{prop}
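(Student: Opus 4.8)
The plan is to prove the cycle of implications $(a) \Rightarrow (b) \Rightarrow (c) \Rightarrow (a)$, using the structural decomposition of $\nu(G)'$ from Lemma~\ref{thm.Rocco} together with the closure of the class of locally finite groups under extensions (Schmidt's theorem). Throughout I will exploit the standing hypothesis that $G'$ is locally finite.

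For $(a) \Rightarrow (b)$, I would use Lemma~\ref{thm.Rocco}, which writes $\nu(G)' = ([G,G^{\vfi}] \cdot G') \cdot (G')^{\vfi}$ as an internal product of subgroups. Assuming $[G,G^{\vfi}]$ is locally finite, and since $G'$ (and hence its isomorphic copy $(G')^{\vfi}$) is locally finite by hypothesis, the whole group $\nu(G)'$ is built up from locally finite pieces by successive extensions. Because the class of locally finite groups is closed under extensions, I would conclude that $\nu(G)'$ is locally finite, taking care to check that the relevant factors are normal so that the extension argument applies.

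The implication $(b) \Rightarrow (c)$ should be the easiest: if $\nu(G)'$ is locally finite, then in particular each element $[x,y^{\vfi}] \in [G,G^{\vfi}] \leq \nu(G)'$ lies in a finite subgroup, hence has finite order, so there exists $m=m(x,y)$ with $[x,y^{\vfi}]^m = 1$. The main work lies in $(c) \Rightarrow (a)$. Here the hypothesis is only a pointwise finite-order condition on the generators $[x,y^{\vfi}]$ of $[G,G^{\vfi}]$, and I must upgrade this to local finiteness of the whole subgroup. The natural route is to invoke Moravec's theorem (cited in the introduction) that local finiteness of $G$ forces local finiteness of $G \otimes G \cong [G,G^{\vfi}]$: given finitely many tensors, they involve finitely many elements of $G$, and modulo the locally finite subgroup $G'$ one can try to reduce to a finitely generated situation where condition $(c)$ together with $G'$ locally finite yields finiteness.

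The hard part will be $(c) \Rightarrow (a)$, specifically passing from the pointwise condition on generators to a uniform statement about arbitrary finitely generated subgroups of $[G,G^{\vfi}]$. A finitely generated subgroup of $[G,G^{\vfi}]$ is generated by finitely many tensors $[x_i,y_i^{\vfi}]$, which involve only finitely many elements $x_i,y_i$ of $G$; I would consider the subgroup $H = \langle x_i, y_i \rangle \leq G$ and study $[H, H^{\vfi}] \leq \nu(H)$. Since $H' \leq G'$ is locally finite and $H$ is finitely generated, the quotient $H/H'$ is a finitely generated abelian group, so that modulo $\mu(H)$ and the locally finite pieces one reduces the problem to controlling the tensor square of a finitely generated group whose generators have tensor-powers trivial. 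The delicate point is ensuring that the pointwise exponent bounds on the $[x,y^{\vfi}]$, combined with the local finiteness of $G'$, are enough to force $[H,H^{\vfi}]$ to be finite; I expect this to require Moravec's result applied after reducing modulo the locally finite subgroup generated by commutators, using Lemma~\ref{quotient} to preserve the relevant local gradedness where needed.
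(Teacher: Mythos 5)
Your implications $(a)\Rightarrow(b)$ and $(b)\Rightarrow(c)$ are correct and coincide with the paper's: the decomposition $\nu(G)' = ([G,G^{\vfi}]\cdot G')\cdot (G')^{\vfi}$ of Lemma~\ref{thm.Rocco} plus Schmidt's extension theorem for the first, and the fact that every element of a locally finite group has finite order for the second.

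The genuine gap is in $(c)\Rightarrow(a)$. The route you sketch --- pass to $H=\langle x_i,y_i\rangle\leq G$ and invoke Moravec's theorem for $H\otimes H$ --- cannot work, because Moravec's theorem requires $H$ itself to be locally finite, whereas the standing hypothesis is only that $G'$ is locally finite; the group $G$ (hence $H$) may be infinite with torsion-free abelianization, so no reduction ``modulo the locally finite subgroup generated by commutators'' will place you in the locally finite setting. Your final sentences remain a statement of intent rather than an argument, and the appeal to Lemma~\ref{quotient} (local gradedness) has no role to play here. The idea you are missing is that $\mu(G)$, the kernel of the derived map $\rho'\colon [G,G^{\vfi}]\to G'$, is \emph{central} in $\nu(G)$. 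Given a finitely generated subgroup $W\leq [G,G^{\vfi}]$ (enlarged, if necessary, to be generated by the finitely many tensors occurring in its generators), its image in $[G,G^{\vfi}]/\mu(G)\cong G'$ is a finitely generated subgroup of a locally finite group, hence finite, while $W\cap\mu(G)$ is central in $W$; thus $W$ is central-by-finite. Schur's theorem then gives $W'$ finite, and $W/W'$ is a finitely generated abelian group generated by images of tensors $[x,y^{\vfi}]$, each of finite order by $(c)$, hence finite. Therefore $W$ is finite and $[G,G^{\vfi}]$ is locally finite, after which $(a)\Rightarrow(b)$ already proved gives the local finiteness of $\nu(G)'$. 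This central-extension-plus-Schur argument is the paper's proof and is what your proposal needs in place of Moravec's theorem.
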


\begin{proof}
(a) $\Rightarrow$ (b): It is sufficient to see that $\nu(G)' = ([G,G^{\vfi}] \cdot G') \cdot (G')^{\vfi}$ (Theorem \ref{thm.Rocco}). \\ 

(b) $\Rightarrow$ (c): Since $\nu(G)'$ is locally finite, it follows that every element in $\nu(G)'$ has finite order. In particular, for every $x,y \in G$ the element $[x,y^{\vfi}]$ has finite order. \\

(c) $\Rightarrow$ (a): Let us first prove that $[G,G^{\vfi}]$ is locally finite. Let $W$ be a finitely generated subgroup of $[G,G^{\vfi}]$. Since the factor group $[G,G^{\vfi}]/\mu(G)$ is isomorphic to $G'$, it follows that $W$ is central-by-finite. By Schur's Lemma \cite[10.1.4]{Rob}, the derived subgroup $W'$ is finite. Since $W/W'$ is an abelian group generated by finite many elements of finite order, it follows that $W/W'$ is also finite. Thus $[G,G^{\vfi}]$ is locally finite. Now, Lemma \ref{thm.Rocco} shows that $\nu(G)' = ([G,G^{\vfi}] \cdot G') \cdot (G')^{\vfi}$. Therefore $\nu(G)'$ is locally finite, by \cite[14.3.1]{Rob}. 
\end{proof}

We are now in a position to prove Theorem A.

\begin{thmA}
Let $p$ be a prime and $G$ a residually finite group satisfying some non-trivial identity $f \equiv~1$. Suppose that for every $x,y \in G$ there exists a $p$-power $q=q(x,y)$ such that $[x,y^{\vfi}]^q = 1$. Then the derived subgroup $\nu(G)'$ is locally finite. 
\end{thmA}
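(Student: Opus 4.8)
The plan is to reduce Theorem A to the criterion furnished by Proposition \ref{prop.1}. By that proposition, since $\nu(G)'$ is locally finite if and only if both $G'$ is locally finite and condition (c) holds, and since the hypothesis already supplies condition (c) (each $[x,y^{\vfi}]$ has finite order, being killed by the $p$-power $q$), the entire theorem collapses to a single assertion: that $G'$ is locally finite. Thus the core of the argument is to prove the local finiteness of the derived subgroup of $G$ itself.

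To establish that $G'$ is locally finite, I would invoke Theorem \ref{thm.1} (Shumyatsky's theorem), whose hypotheses are tailored for exactly this situation. That theorem requires a residually finite group satisfying a non-trivial identity that is generated by a normal commutator-closed set of $p$-elements, and concludes local finiteness. The natural candidate for such a generating set is $X=\{[a,b^{\vfi}]\mid a,b\in G\}$, which by Lemma \ref{lem.closed} is already a normal commutator-closed subset of $\nu(G)$. The hypothesis of Theorem A guarantees that each element of $X$ is a $p$-element, since $[x,y^{\vfi}]^q=1$ for a $p$-power $q$. The subtlety is that Theorem \ref{thm.1} must be applied inside the right ambient group: one works with $\nu(G)$, or more precisely with the subgroup $\langle X\rangle=[G,G^{\vfi}]$, and transfers the conclusion down to $G'$ via the derived map $\rho'$.

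The key steps, in order, are as follows. First I would verify that $\nu(G)$ (or a suitable section of it) still satisfies a non-trivial identity: this is where I expect the main obstacle to lie, since the identity $f\equiv 1$ is assumed on $G$, not on $\nu(G)$, and $\nu(G)$ is a potentially much larger extension. One must argue that the relevant subgroup generated by $X$ — namely $[G,G^{\vfi}]$, which maps onto $G'$ — inherits enough of the identity, or else one applies Theorem \ref{thm.1} to a group on which the identity genuinely holds and then descends. Second, using Lemma \ref{lem.closed} together with the $p$-element hypothesis, the set $X$ is a normal commutator-closed set of $p$-elements generating $[G,G^{\vfi}]$. Third, an application of Theorem \ref{thm.1} (possibly combined with residual finiteness of the relevant section) yields that $[G,G^{\vfi}]$ is locally finite. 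Fourth, applying $\rho'$ gives that $G'=\rho'([G,G^{\vfi}])$ is a homomorphic image of a locally finite group, hence locally finite.

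Once $G'$ is known to be locally finite, the proof concludes immediately: the hypotheses of Proposition \ref{prop.1} are met — $G'$ is locally finite and condition (c) holds — so $\nu(G)'$ is locally finite by the equivalence (c)$\Rightarrow$(b). The delicate point throughout is the interplay between identities on $G$ and on $\nu(G)$: I would expect the honest work to be confirming that Shumyatsky's theorem can be legitimately applied, i.e. pinning down precisely which residually finite group satisfying an identity is generated by the normal commutator-closed set of $p$-elements, and ensuring that residual finiteness is available for that group. If $\nu(G)$ fails to be residually finite or to satisfy the identity directly, the remedy is to pass to $G'$ via $\rho'$ and exploit that $G'$ is a quotient under a map whose kernel behaves well, invoking residual finiteness of $G$ rather than of $\nu(G)$.
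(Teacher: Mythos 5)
Your proposal matches the paper's proof in outline: reduce via Proposition \ref{prop.1} to showing that $G'$ is locally finite, and obtain that from Shumyatsky's Theorem \ref{thm.1}. The one point you left open --- exactly which group Theorem \ref{thm.1} is applied to --- is resolved in the paper precisely as in your ``remedy'': it is applied to $G'$ itself, not to $[G,G^{\vfi}]$ or $\nu(G)$. Indeed $G'$, being a subgroup of $G$, is residually finite and satisfies the identity $f\equiv 1$; it is generated by the set of commutators $\{[a,b] \mid a,b\in G\}$, which is normal and commutator-closed in $G$; and each $[a,b]$ is a $p$-element because $[a,b]^q$ is the image of $[a,b^{\vfi}]^q=1$ under the derived map $\rho'$ (equivalently, under the isomorphism $[G,G^{\vfi}]/\mu(G)\cong G'$). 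Your first route --- applying Theorem \ref{thm.1} to $[G,G^{\vfi}]$ inside $\nu(G)$ --- would indeed founder on exactly the obstacle you identified, since neither residual finiteness nor the identity $f$ is available for $\nu(G)$ or for $[G,G^{\vfi}]$; the paper never needs them there, invoking Lemma \ref{lem.closed} only to record that $X$ is normal and commutator-closed before descending to $G'$.
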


\begin{proof}
The proof is completed by showing that $G'$ is locally finite (Proposition \ref{prop.1}). Set $$X = \{ \ [a,b^{\varphi}]; \ a,b\in G \}.$$ 

By Lemma \ref{lem.closed}, $X$ is a normal commutator-closed subset of $p$-elements in $\nu(G)$. Since $[G,G^{\vfi}]/\mu(G) \cong G'$, we conclude that the derived subgroup $G'$ is residually finite satisfying the identity $f \equiv~1$. In the same manner we can see that for every $x,y$ there exists a $p$-power $q=q(x,y)$ such that $[x,y]^q = 1$. Theorem \ref{thm.1} now implies shows that $G'$ is locally finite. The proof is complete. 
\end{proof}

\begin{rem}
For more details concerning residually finite groups in which the derived subgroup is locally finite see \cite{shu,shu3}.
\end{rem}

{\noindent} Now we will deal with Theorem B: \emph{Let $m,n$ be positive integers and $p$ a prime. Suppose that $G$ is a residually finite group in which for every $x,y \in G$ there exists a $p$-power $q=q(x,y)$ dividing $p^m$ such that $[x,y^{\vfi}]^q$ is left $n$-Engel in $\nu(G)$. Then the non-abelian tensor square $G \otimes G$ is locally virtually nilpotent.}   \\

We denote by $\mathcal{N}$ the class of all finite nilpotent groups. The following result is a straightforward corollary of \cite[Lemma 2.1]{w}.

\begin{lem}\label{Wilson}
Let $G$ be a finitely generated residually-$\mathcal{N}$ group. For each prime $p$, let $R_p$ be the intersection of all normal subgroups of $G$ of finite $p$-power index. If $G/R_p$ is nilpotent for each $p$, then $G$ is nilpotent.
\end{lem}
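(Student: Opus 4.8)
The plan is to realize $G$ as a subgroup of the direct product $\prod_p G/R_p$ of the (by hypothesis) nilpotent quotients, and then to extract from finite generation a uniform bound on their nilpotency classes.

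First I would check that $\bigcap_p R_p = 1$. Since $G$ is residually-$\mathcal{N}$, the intersection of all normal subgroups $N$ with $G/N$ finite nilpotent is trivial, so it suffices to show $\bigcap_p R_p \subseteq N$ for each such $N$. Being finite nilpotent, $G/N$ is the internal direct product of its Sylow subgroups $P_p$; for each prime $p$ dividing $|G/N|$ the preimage $M_p$ of $\prod_{q \neq p} P_q$ is a normal subgroup containing $N$ with $G/M_p \cong P_p$ a finite $p$-group, so $M_p \supseteq R_p$, and $N = \bigcap_p M_p \supseteq \bigcap_p R_p$. Intersecting over all admissible $N$ gives $\bigcap_p R_p = 1$, whence the diagonal map $G \to \prod_p G/R_p$ is injective.

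Writing $c_p$ for the nilpotency class of $G/R_p$, the remaining task is to see that $c := \sup_p c_p$ is finite. Granting this, $\gamma_{c+1}(G)$ has trivial image in every nilpotent quotient $G/R_p$, so $\gamma_{c+1}(G) \subseteq \bigcap_p R_p = 1$ and $G$ is nilpotent of class at most $c$, as required. The two assertions are in fact equivalent: if $G$ is nilpotent then the classes $c_p$ are certainly bounded, so the whole lemma is exactly the statement that finite generation forces $\sup_p c_p < \infty$.

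This uniform bound is the main obstacle, and it is precisely what Wilson's Lemma 2.1 supplies. A priori the residually-$p$ nilpotent quotients $G/R_p$ could have classes growing with $p$, and finite generation of $G$ is indispensable here, as the failure of the conclusion for a non-nilpotent free group (where each $G/R_p$ is itself non-nilpotent, so the hypothesis never holds) already indicates. I expect the bound to be obtained by Lie-theoretic arguments of the type recalled in Section 3 — passing to the associated restricted Lie algebra of each $G/R_p$, exploiting that it is PI and generated by ad-nilpotent elements to apply Zelmanov's theorem, and thereby controlling the classes uniformly across the primes. Once $c = \sup_p c_p < \infty$ is in hand, the corollary follows at once from the reduction above.
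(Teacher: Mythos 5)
Your preliminary reductions are sound: the Sylow-decomposition argument giving $\bigcap_p R_p=1$ is correct, and granting that, the lemma is indeed equivalent to the assertion that the nilpotency classes $c_p$ of the quotients $G/R_p$ admit a finite upper bound. But note that the paper offers no proof of this lemma at all: it is imported verbatim as ``a straightforward corollary of \cite[Lemma 2.1]{w}'', and Wilson's Lemma~2.1 is essentially the statement itself. So when you write that the uniform bound ``is precisely what Wilson's Lemma~2.1 supplies'', you have reduced the lemma to its entire content and then cited the very result being proved; your own remark that ``the whole lemma is exactly the statement that finite generation forces $\sup_p c_p<\infty$'' makes the circularity explicit. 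As a self-contained proof there is therefore a genuine gap: the one nontrivial step is missing. (As a reading of the paper, on the other hand, you have matched it exactly, since the paper also does nothing beyond citing Wilson.)

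Your guess about how the bound is obtained is also misdirected. Zelmanov-type Lie methods are not what drives Wilson's Lemma~2.1; in \cite{w}, as in Section~4 of the present paper, the Lie machinery is used to prove that each $G/R_p$ \emph{is} nilpotent, which here is a hypothesis rather than a conclusion. The lemma itself has an elementary proof, roughly as follows. Since $G$ is finitely generated, each factor $\gamma_i(G)/\gamma_{i+1}(G)$ is a finitely generated abelian group, and the commutator map gives a surjection of $\gamma_i(G)/\gamma_{i+1}(G)\otimes G/G'$ onto $\gamma_{i+1}(G)/\gamma_{i+2}(G)$, so once some factor has finite exponent $e$ all later factors have exponent dividing $e$ and involve only the finitely many primes dividing $e$. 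Using that a finitely generated nilpotent group is residually finite-$p$ exactly when its torsion is a $p$-group, one checks that whenever $\gamma_i(G)>\gamma_{i+1}(G)$ there is a prime $p$ (drawn from that finite set, or arbitrary if the factor is infinite) with $\gamma_i(G)\not\leq R_p$, i.e.\ $c_p\geq i$. If the lower central series never stabilized, some single prime would occur for infinitely many $i$, forcing $G/R_p$ to be non-nilpotent, a contradiction; once it stabilizes, $\gamma_{c+1}(G)=\bigcap_i\gamma_i(G)\leq\bigcap_p R_p=1$. Some argument of this kind, or an honest black-box appeal to \cite[Lemma 2.1]{w} as the paper makes, is needed to close your gap.
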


The next result will be helpful in the proof of Theorem B.

\begin{prop} \label{prop.dash}
Let $m,n$ be positive integers and $p$ a prime. Suppose that $G$ is a residually finite group in which for every $x,y \in G$ there exists a $p$-power $q=q(x,y)$ dividing $p^m$ such that $[x,y]^q$ is left $n$-Engel. Then $(G')^{p^m}$ is locally nilpotent. Moreover, the derived subgroup $G'$ is locally virtually nilpotent. 
\end{prop}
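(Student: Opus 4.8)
The plan is to establish the two assertions in turn, proving first that $(G')^{p^m}$ is locally nilpotent and then deducing the virtual nilpotency of $G'$. Both conclusions are local, and the hypothesis is inherited by subgroups --- if $[x,y]^q$ is left $n$-Engel in $G$ it is left $n$-Engel in every subgroup containing $x$ and $y$ --- so I may assume throughout that $G$ is finitely generated. The main engine is the Lie-theoretic machinery of Section 3, applied one prime at a time through Wilson's reduction (Lemma \ref{Wilson}): to force nilpotency it suffices to control, for each prime $\ell$, the Lie algebra $L_\ell$ attached to the Zassenhaus--Jennings--Lazard series of the pro-$\ell$ image of the relevant section.

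The one computation that goes through cleanly, and where the $p$-power hypothesis is used, is the ad-nilpotency of commutator images. Fix a prime $\ell$ and a commutator $c=[x,y]$, and let $q=q(x,y)\mid p^m$ be such that $c^q$ is left $n$-Engel. Since $c^q$ is left $n$-Engel in the group, its image $\widetilde{c^q}$ in $L_\ell$ is ad-nilpotent of index at most $n$, so $(\mathrm{ad}\,\widetilde{c^q})^n=0$; by Lazard's Lemma \ref{4} we have $(\mathrm{ad}\,\tilde c)^q=\mathrm{ad}(\widetilde{c^q})$, whence $(\mathrm{ad}\,\tilde c)^{qn}=0$. As $q\le p^m$, this shows that \emph{every} commutator image $\tilde c$ is ad-nilpotent of index at most $N:=p^m n$, a bound independent of $\ell$ and of the commutator. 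Because every iterated commutator is again a commutator of two elements of $G$, the same bound applies to every Lie commutator in the generators. Taking $L_\ell$ of the pro-$\ell$ image of $G'$ --- whose degree-one part is generated by the images of commutators --- both the generators and all their commutators are therefore ad-nilpotent of index at most $N$.

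This places us one hypothesis short of Zelmanov's Theorem \ref{1}, the missing ingredient being that $L_\ell$ is PI; verifying this is the step I expect to be the main obstacle. Uniform ad-nilpotency of the monomials does not by itself impose the Engel identity on arbitrary sums, so the PI property cannot simply be read off; and --- in contrast with Shumyatsky's theorem \cite{shu1,shu2}, where the hypothesis ``every commutator is left $n$-Engel'' is literally the group-law $[z,{}_n[x,y]]\equiv 1$ --- here the varying exponent $q=q(x,y)$ prevents the analogous word from being an identity of $G$; indeed the Engel index of $[x,y]^{p^m}$ is not uniformly bounded across finite quotients, so no such law is available on $G$ itself. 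The route I would take is to secure the PI property on the commutator-generated section by producing a genuine group-law there and invoking Lemma \ref{2}, extracting that law from the finite quotients (using that a bounded left Engel element of a finite group lies in its Fitting subgroup to constrain the commutator-generated part). Granting PI, Theorem \ref{1} makes $L_\ell$ nilpotent; Lazard's Theorem \ref{3} then renders the pro-$\ell$ image $\ell$-adic analytic, and combined with the uniform ad-nilpotency this makes the pro-$\ell$ image of $(G')^{p^m}$ nilpotent (the restriction to $p^m$-th powers being exactly what the hypothesis controls). Assembling over all primes through the reduction of Lemma \ref{Wilson}, applied to a finitely generated subgroup, yields that $(G')^{p^m}$ is locally nilpotent.

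Finally I would deduce the \emph{Moreover} clause. Let $H$ be a finitely generated subgroup of $G'$. Then $H^{p^m}\le(G')^{p^m}$ is locally nilpotent and normal in $H$, and $H$ is residually finite, so Lemma \ref{quotient} shows that $H/H^{p^m}$ is locally graded. Moreover $H/H^{p^m}$ is finitely generated, satisfies the non-trivial identity $x^{p^m}\equiv 1$, is generated by finitely many elements of finite order, and for all $x,y$ has $[x,y]^{p^m}=1$; hence Lemma \ref{graded} forces $H/H^{p^m}$ to be finite. Consequently $H^{p^m}$ has finite index in $H$, so it is finitely generated and therefore nilpotent, and $H$ is virtually nilpotent. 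Thus $G'$ is locally virtually nilpotent, completing the plan.
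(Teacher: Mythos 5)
Your overall architecture coincides with the paper's (reduce to finitely generated subgroups, pass to the Zassenhaus--Jennings--Lazard Lie algebra via Wilson's reduction, get ad-nilpotency of Lie commutators from Lazard's Lemma \ref{4}, invoke Zelmanov's Theorem \ref{1} and Lazard's Theorem \ref{3}, and finish the \emph{Moreover} clause with Lemmas \ref{quotient} and \ref{graded}). But the proof is explicitly conditional at the one point where the actual idea of the proposition lives: you never establish that the Lie algebra is PI. You correctly diagnose why the naive law $[z,{}_n[x,y]^{p^m}]\equiv 1$ is unavailable (the exponent $q(x,y)$ varies, and a power of a left $n$-Engel element need not be left $n$-Engel), and you propose to ``extract a law from the finite quotients'' via Fitting-subgroup considerations, but no law is produced and it is not clear that route closes. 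The device you are missing is short: since for each pair $(x,y)$ \emph{some} $[x,y]^{p^j}$ with $0\le j\le m$ is left $n$-Engel, the single left-normed word
\[
f=[z,\,{}_n[x_1,y_1],\,{}_n[x_1,y_1]^p,\,{}_n[x_1,y_1]^{p^2},\dots,{}_n[x_1,y_1]^{p^m}]
\]
vanishes identically on $G$: whichever inner segment corresponds to the good exponent already evaluates to $1$, and commutating further with anything preserves $1$. This is a genuine group identity of $G$, hence of every subgroup $H$, so Lemma \ref{2} (Wilson--Zelmanov) yields PI at once and Theorem \ref{1} becomes applicable. Without this (or an equivalent substitute) the first assertion is unproved, and the \emph{Moreover} clause --- which you deduce correctly, essentially as in the paper --- remains conditional on it.

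A secondary gap: your endgame ``Lazard's theorem plus uniform ad-nilpotency makes the pro-$\ell$ image of $(G')^{p^m}$ nilpotent'' is not an argument. The paper's route is to take $H$ generated by finitely many conjugates of $x^q$ for a fixed commutator $x$, conclude from analyticity and Tits' alternative that $H$ is virtually soluble, hence soluble (being residually-$p$), and only then obtain local nilpotency of the normal closure of $x^q$ from Gruenberg's theorem \cite[12.3.3]{Rob}, using crucially that $H$ is generated by left $n$-Engel elements; this is what places $x^{p^m}$ in the Hirsch--Plotkin radical and gives the first assertion. That step, and the group-law above, are the two ingredients your plan would need before it compiles into a proof.
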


\begin{proof}
Let $K$ be the derived subgroup of $G$. Choose an arbitrary commutator $x \in G$ and let $q=q(x)$ be a positive integer $q=q(x)$ dividing $p^m$ such that $x^q$ is left $n$-Engel. It suffices to prove that the normal closure of $x^q$ in $G$, $\langle (x^q)^h \ \vert \ h \in G  \rangle$, is locally nilpotent. In particular, $x^q \in HP(G)$. Let $b_1, \ldots,b_t$ be finitely many elements in $G$. Let $h_i = (x^q)^{b_i}$, $i=1, \ldots,t$ and $H = \langle h_1, \ldots, h_t\rangle$. We only need to show that $H$ is soluble (Gruenberg's Theorem \cite[12.3.3]{Rob}). As a consequence of Lemma \ref{Wilson}, we can assume that $H$ is residually-$p$ for some prime $p$. Let $L=L_p(H)$ be the Lie algebra associated with the Zassenhaus-Jennings-Lazard series $$H=D_1\geq D_2\geq \cdots$$ of $H$. Then $L$ is generated by $\tilde{h}_i=h_i D_2$, $i=1,2,\dots,t$. Let $\tilde{h}$ be any Lie-commutator in $\tilde{h}_i$ and $h$ be the group-commutator in $h_i$ having the same system of brackets as $\tilde{h}$. Since for any group commutator $h$ in $h_1\dots,h_t$ there exists a positive integer $q$ dividing $p^m$ such that $h^q$ is left $n$-Engel, Lemma \ref{4} shows that any Lie commutator in $\tilde h_1\dots,\tilde h_t$ is ad-nilpotent. On the other hand, for every commutator $x = [a_1,b_1]$ there exists a positive integer $q = q(x)$ dividing $p^m$ such that $x^q$ is left $n$-Engel, then $G$ satisfies the identity
$$f = [z, {}_{n}[x_1,y_1], {}_{n} [x_1,y_1]^p, {}_{n} [x_1,y_1]^{p^2}, {}_{n} [x_1,y_1]^{p^3}, \ldots, {}_{n} [x_1,y_1]^{p^m}]\equiv 1$$
and therefore, by Lemma \ref{2}, $L$ satisfies some non-trivial polynomial identity. Now Zelmanov's Theorem \ref{1} implies that $L$ is nilpotent. Let $\hat{H}$ denote the pro-$p$ completion of $H$. Then $L_p(\hat{H})=L$ is nilpotent and $\hat{H}$ is a $p$-adic analytic group by Theorem \ref{3}. Clearly $H$ cannot have a free subgroup of rank 2 and so, by Tits' Alternative \cite{tits}, $H$ is virtually soluble. As $H$ is residually-$p$ we have $H$ is soluble. 

Choose arbitrarily finitely many commutators $[a_1,b_1], \ldots, [a_t,b_t]$ and let $M = \langle [a_1,b_1], \ldots, [a_t,b_t] \rangle$. Since $MK^{p^m}$ is residually finite and $K^{p^m}$ is locally graded, we conclude that $(MK^{p^m})/K^{p^m}$ is also locally graded (Lemma \ref{quotient}). Now, looking at the quotient $(MK^{p^m})/K^{p^m}$, the claim is immediate from Lemma \ref{graded}. As $M/(M \cap K^{p^m})$ is finite we have $M \cap K^{p^m}$ finitely generated. In particular, $M \cap K^{p^m}$ is nilpotent and $M$ is virtually nilpotent. This completes the proof.
\end{proof}
\begin{rem}
The above proposition is no longer valid if the assumption of residual finiteness of $G$ is dropped. In \cite{DK} G.\,S. Deryabina and P.\,A. Kozhevnikov showed that there exists an integer $N \geqslant 1$ such that for every odd number $n > N$ there is a group $G$ with commutator subgroup $G'$ not locally finite and satisfying the identity $$f = [x,y]^n \equiv~1.$$ In particular, $G'$ cannot be locally virtually nilpotent. For more details concerning groups in which certain powers are left Engel elements see \cite{B,BS,BSTT,STT}.
\end{rem}

The proof of Theorem B is now easy. 

\begin{proof}[Proof of Theorem B]
Let $M$ be a finitely generated subgroup of $[G,G^{\vfi}]$. Clearly, there exist finitely many elements $a_1,\ldots,a_s$, $b_1, \ldots,b_s \in G$ such that $$M  \leqslant \langle [a_i,b_i^{\varphi}]  \mid \ i = 1,\ldots,s  \rangle = H.$$
It suffices to prove that $H$ is virtually nilpotent. Since for every $a,b \in G$ there exists a $p$-power $q=q(a,b)$ dividing $p^m$ such that $[a,b^{\vfi}]^q$ is left $n$-Engel, we deduce that the power $[a,b]^q$ is left $n$-Engel (Lemma \ref{Engel}). That $G'$ is locally virtually nilpotent follows from Proposition \ref{prop.dash}. Therefore, since $\mu(G)$ is the kernel of the derived map $\rho'$, it follows that $H/(H \cap \mu(G))$ is virtually nilpotent. As $\mu(G)$ is a central subgroup of $\nu(G)$ we have $H$ virtually nilpotent, which proves the theorem.  
\end{proof}

We conclude this paper by formulating an open problem related to the results described here. \\ 

{\noindent}{\bf Problem 1.} {\em Let $m,n$ be positive integers. Suppose that $G$ is a residually finite group in which for every $x,y \in G$ there exists a positive integer $q=q(x,y)$ dividing $m$ such that  $[x,y^{\vfi}]^q$ is left $n$-Engel in $\nu(G)$. Is it true that $G \otimes G$ is locally virtually nilpotent?} \\


\begin{thebibliography}{10}

\bibitem[Bas15]{B} R. Bastos, On residually finite groups with Engel-like conditions, to appear in {\it Comm. Alg.}, preprint available at arXiv:1505.04468 [math.GR].   

\bibitem[BS15]{BS} R. Bastos and P. Shumyatsky, On profinite groups with Engel-like conditions, {\it J. Algebra} {\bf 427} (2015) 215--225. 

\bibitem[BR]{BR} R. Bastos and N.\,R. Rocco, Non-abelian tensor square of finite-by-nilpotent groups, preprint available at arXiv:1509.05114 [math.GR]. 

\bibitem[BSTT13]{BSTT} R. Bastos, P. Shumyatsky, A. Tortora and M. Tota, On groups admitting a word whose values are Engel,
{\it Int. J. Algebra Comput.} {\bf 23} (2013), no. 1, 81--89.

\bibitem[BL84]{BL84}   R. Brown and J.\,L. Loday, Excision homotopique en basse dimension, {\it C. R. Acad. Sci. Paris Sér. I} {\bf 298} (1984), no. 15, 353--356.

\bibitem[BL87]{BL} R. Brown, and J.-L. Loday, {\it Van Kampen theorems for diagrams of 
spaces}, Topology, {\bf 26} (1987), 311--335.

\bibitem[DK99]{DK} G.\,S. Deryabina and P.\,A. Kozevnikov, The derived subgroup of a group with commutators of bounded order can be non-periodic, {\it Comm. Algebra} {\bf 27} (1999), 4525--4530.

\bibitem[Laz54]{la} M. Lazard, 
Sur les groupes nilpotents et les anneaux de Lie, {\it Ann. Sci. \'Ecole Norm. Sup.}, {\bf 71} (1954), 101--190.

\bibitem[Laz65]{l2} M. Lazard, 
Groupes analytiques $p$-adiques, {\it IHES Publ. Math.}, {\bf 26} (1965), 389--603.

\bibitem[LMS95]{LMS} P. Longobardi, M. Maj and H. Smith,
A note on locally graded groups, {\it Rend. Sem. Mat. Univ. Padova} {\bf 94} (1995) 275--277.

\bibitem[Mor08]{M} P. Moravec, {\em The exponents of nonabelian tensor products of groups}, J. Pure Appl. Algebra, {\bf 212} (2008), 1840--1848.

\bibitem[NR12]{NR} I.\,N. Nakaoka and N.\,R. Rocco, {\em A survey of non-abelian tensor products of groups and related constructions}, Bol. Soc. Paran. Mat., {\bf 30} (2012), 77--89.

\bibitem[Rob96]{Rob} D.\,J.\,S. Robinson,
\textit{A course in the theory of groups}, 2nd edition, Springer-Verlag, New York, 1996.

\bibitem[Roc91]{NR1} N.\,R. Rocco, {\it On a construction related to the non-abelian 
tensor square of a group}, Bol. Soc. Brasil Mat., {\bf 22} (1991), 63--79.

\bibitem[Roc94]{NR2} N.\,R. Rocco, {\it A presentation for a crossed embedding of finite solvable groups}, Comm. Algebra {\bf 22} (1994), 1975--1998. 

\bibitem[Shu99a]{shu} P. Shumyatsky, Groups with commutators of bounded order, Proc. Amer. Math. Soc. {\bf 127} (1999), 2583--2586. 

\bibitem[Shu99b]{shu1} P. Shumyatsky, On residually finite groups in which commutators are Engel,
{\it Comm. Algebra} {\bf 27} (1999) 1937--1940.

\bibitem[Shu00]{shu2} P. Shumyatsky,
Applications of Lie ring methods to group theory, in {\it Nonassociative algebra and its applications}, eds. R. Costa, A. Grishkov, H. Guzzo Jr.
and L.\,A. Peresi,
Lecture Notes in Pure and Appl. Math., Vol. 211 (Dekker, New York, 2000), pp. 373--395.

\bibitem[Shu05]{shu3} P. Shumyatsky, {\it Elements of prime power order in residually finite groups}, Int. J. Algebra Comput. {\bf 15} (2005), 571--576. 

\bibitem[STT16]{STT} P. Shumyatsky, A. Tortora and M. Tota, {\it On locally graded groups with a word whose values are Engel}, to appear in Proc. Edinb. Math. Soc., preprint avaible at arXiv: 1305.3045v2 [math.GR]. 

\bibitem[Tit72]{tits} J. Tits, {\it Free subgroups in linear groups}, J. Algebra, {\bf 20}
(1972) 250--270.

\bibitem[Wil91]{w} J.\,S. Wilson, {\it Two-generator conditions for residually finite
groups}, Bull. London Math. Soc. {\bf 23} (1991), 239--248.

\bibitem[WZ92]{wize} J.\,S. Wilson and E.\,I. Zelmanov, 
{\it Identities for Lie algebras of pro-$p$ groups}, J. Pure Appl. Algebra, {\bf 81} (1992) 103--109.

\bibitem[Zel90]{zelm} E.\,I. Zelmanov, On the restricted Burnside problem. In: {\it Proceedings of the International Congress of Mathematicians}. 1990. p. 395--402.

\bibitem[Zel91a]{ze1} E. Zel'manov, 
{\it The solution of the restricted Burnside problem for groups of odd exponent}, Math. USSR Izv. {\bf 36} (1991), 41--60.

\bibitem[Zel91b]{ze2} E. Zel'manov, 
{\it The solution of the restricted Burnside problem for 2-groups}, Math. Sb. {\bf 182} (1991), 568--592.
\end{thebibliography}
\end{document}